\documentclass[reqno]{amsart}

\usepackage{amsmath}
\usepackage{amsthm}
\usepackage{amsfonts}
\usepackage{amssymb}
\usepackage{enumitem}
\usepackage{hyperref}
\usepackage{bbm}
\usepackage[nocompress]{cite}
\usepackage{bm}

\usepackage{todonotes}

\newcommand{\indicator}[1]{\ensuremath{\mathbf{1}_{\{#1\}}}}

\DeclareMathOperator{\tr}{tr}
\DeclareMathOperator{\diag}{diag}

\DeclareMathOperator{\rank}{rank}
\DeclareMathOperator{\supp}{supp}

\newcommand{\Prob}{\mathbb{P}}
\newcommand{\E}{\mathbb{E}}

\renewcommand{\P}{\mathbb{P}}

\newcommand{\x}{\bm{x}}

\def\R{\mathbb{R}}

\theoremstyle{plain}
\newtheorem{theorem}{Theorem}[section]

\newtheorem{lemma}[theorem]{Lemma}
\newtheorem{corollary}[theorem]{Corollary}

\newtheorem{proposition}[theorem]{Proposition}

\theoremstyle{definition}
\newtheorem{definition}[theorem]{Definition}

\theoremstyle{remark}
\newtheorem{remark}[theorem]{Remark}

\begin{document}
	
	\title[Linear Independence of Random Boolean Tensors]{Linear Independence of Random Boolean Tensor Powers at the Dimension Threshold}

	\author[K. Luh]{Kyle Luh}
	\address{Department of Mathematics\\ University of Colorado\\ Campus Box 395\\ Boulder, CO 80309-0395\\USA}
	\email{kyle.luh@colorado.edu}
	\thanks{K. Luh was supported in part by Simons Grant MP-TSM-00001988 and DOE Office of Science Advanced Scientific Computing Research (ASCR) program, funding opportunity DE-FOA-0003432}

	\begin{abstract}
		Let $d \geq 1$ be fixed and let 
		\[
		D(n,d) := \sum_{j=0}^{d} \binom{n-1}{j}.
		\]	
		We show that if $\x^{(1)}, \dots, \x^{(m)}$ are independent uniform points of $\{\pm 1\}^n$ then uniformly for $m \leq D(n,d)$, there exists a constant $C_d > 0$ such that
		\[
		\P((\x^{(1)})^{\otimes d}, \dots, (\x^{(m)})^{\otimes d} \text{ are linearly independent}) = 1 - O_d\left(\frac{\log^{C_d} n}{n^{1/2}} \right).  
		\]
		This achieves the exact dimensional threshold and answers a question asked by Baldi and Vershynin.
		We discuss applications of the result to the semidefinite relaxation of the cut-polytope and to matrix factorization.  
	\end{abstract}

	\maketitle 
	
	\section{Introduction}
	
	We consider the question of how many independent random Boolean tensor powers can be sampled before linear dependence becomes unavoidable.  Dimension gives an immediate upper bound, but it is not obvious whether a random sample attains that bound.  The difficulty is already present for rank-one matrices: although the
	vectors generating the matrices have independent coordinates, the matrix entries satisfy many algebraic relations. We prove that, for every fixed tensor order, random sampling attains the exact dimension of the Boolean tensor span with probability tending to one. 
	
	Let $\x^{(1)},\ldots,\x^{(m)}$ be independent uniform points of $\{\pm 1\}^n$. We regard each symmetric tensor $(\x^{(i)})^{\otimes d}$ as a vector in $(\R^n)^{\otimes d}$ and ask whether these vectors are linearly
	independent over $\R$. The relevant dimension (see Section \ref{sec:dimension}) is
	\begin{equation}\label{eq:intro-dimension}
		D(n,d):=\dim\text{span}\{x^{\otimes d}:x\in \{\pm 1\}^n \}
		=\sum_{j=0}^{d}\binom{n-1}{j}.
	\end{equation}
	This dimension threshold follows from the relations $x_i^2=1$, which reduce homogeneous degree-$d$ monomials to the distinct characters of degrees at most $d$ and of the same parity as $d$. These characters form a basis of the restricted polynomial space.	For fixed $d$, we have $D(n,d)\sim n^d/d!\sim\dim\text{Sym}^d(\R^n)$.
	In particular, $D(n,2)=1+\binom n2$ as can be seen by observing that the diagonal entries of $xx^\top$
	coincide, while its distinct off-diagonal entries are the products $x_ix_j$.

	Motivated by recent applications of random tensors in theoretical computer science \cite{MR4438378, anderson2014more, anari2018smoothed}, Baldi and Vershynin explicitly asked whether the number of independent random Boolean tensor powers can equal the dimension of their span, without any sample-size deficit. They singled out the question of whether
	$1+\binom{n}{2}$ independent random matrices $x^{(i)}(x^{(i)})^\top$ are linearly independent with high probability \cite[Section 10.2.2, p.46]{BV2018v1}.
	Our main theorem answers this question for every fixed degree. Define
	\begin{equation}\label{eq:failure-definition}
		p_{n,d}(m)=\Prob\!\left[
		(\x^{(1)})^{\otimes d},\ldots,(\x^{(m)})^{\otimes d}
		\text{ are linearly dependent}\right].
	\end{equation}
	
	\begin{theorem}[Independence at the dimension threshold]\label{thm:main}
		There is an absolute constant $B$ such that the following holds. For every
		fixed $d\geq1$, there is a constant $C_d >0$ such that, for
		$1\leq m\leq D(n,d)$,
		\begin{equation}\label{eq:main-bound}
			p_{n,d}(m)\leq C_d\frac{(\log n)^{a_d+1}}{\sqrt n},
			\qquad a_d=Bd\log d.
		\end{equation}
	\end{theorem}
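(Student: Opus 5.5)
We reduce to a statement about random vectors in the space of multilinear characters. Fix $d$ and let $S_d$ be the set of subsets $T\subseteq[n]$ with $|T|\le d$ and $|T|\equiv d \pmod 2$; note $|S_d|=D(n,d)$. By the reduction described in Section~\ref{sec:dimension}, $x^{\otimes d}$ lies, via a fixed injective linear map, in the span of the character vectors. It is therefore equivalent to show that the vectors $v(\x^{(i)}) = (\chi_T(\x^{(i)}))_{T\in S_d}\in\R^{D(n,d)}$, where $\chi_T(x)=\prod_{t\in T}x_t$, are linearly independent. Since $p_{n,d}(m)$ is monotone in $m$, it suffices to treat $m=D:=D(n,d)$. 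In that case we must show that the $D\times D$ matrix $M$ with rows $v(\x^{(i)})$ is nonsingular with probability $1-O(\log^{a_d+1}n/\sqrt n)$.

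\textbf{Step 1: exposing rows.} We expose the rows one at a time. Let $W_k$ be the span of the first $k$ rows. We have
\[
p_{n,d}(D)\le \sum_{k<D}\P\big(v(\x^{(k+1)})\in W_k,\ \dim W_k=k\big).
\]
When $\dim W_k=k<D$, there is a nonzero vector $a\in W_k^\perp$, and the event $v(\x)\in W_k$ implies $f_a(\x):=\sum_T a_T\chi_T(\x)=0$. Here $f_a$ is a nonzero multilinear polynomial of degree $\le d$ in Rademacher variables. The naive bound is Schwartz--Zippel for the Boolean cube, which gives $\P(f_a(\x)=0)\le 1-2^{-d}$. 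Summing this over $D\sim n^d$ steps is useless, so the estimate must be sharpened according to the structure of the normal vector, as in the Kahn--Koml\'os--Szemer\'edi and Tao--Vu programs.

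\textbf{Step 2: structured versus unstructured normals.} For the early steps, $k\le D-n^{d-1}\log^{C}n$, the space $W_k^\perp$ has large dimension. We will choose the normal $a$ inside it to be spread out, with many nonzero coefficients on degree-$d$ characters. A polynomial Littlewood--Offord inequality (Meka--Nguyen--Vu, or Razborov--Viola) then gives
\[
\P(f_a(\x)=0)\le \frac{\log^{O(d\log d)} n}{\sqrt{r}},
\]
where $r$ is the size of a maximal matching in the support hypergraph. This is the source of the exponent $a_d=Bd\log d$. For the last $O(n^{d-1}\operatorname{polylog} n)$ steps, we instead bound the total probability that $M$ has corank at least $1$ directly. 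Fix a nonzero $a$ in the left kernel of the $(D-1)$-row submatrix and split into two cases. If $a$ is \emph{sparse/structured}, meaning it is essentially supported on few variables, we use a union bound over supports together with the Boolean Schwartz--Zippel bound; this gives an exponentially small probability per coordinate pattern, which beats the entropy. If $a$ is \emph{spread}, polynomial Littlewood--Offord gives $\P(f_a(\x^{(D)})=0)\le\log^{a_d}n/\sqrt n$, and one final row costs only this. By exchangeability of the rows, it suffices to show that with high probability the normal to the span of any $D-1$ rows is spread. We do this by induction on $d$. A compressible normal forces, after restricting to a variable $x_j$ and conditioning, a dependency among degree $\le d-1$ character vectors in $n-1$ variables, which occurs with probability controlled by the $d-1$ case. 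The base case $d=1$ is Kahn--Koml\'os--Szemer\'edi/Tao--Vu singularity for Bernoulli matrices of size $n$ (with parity constraint), with bound far better than $n^{-1/2}$.

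\textbf{Main obstacle.} The hard step is showing that the kernel normal is unstructured. Concretely, we must rule out that $W_{D-1}^\perp$ contains a vector whose polynomial $f_a$ depends on few variables or factors through a low-degree structure, for example $f_a=g(\x)\cdot x_j$ or polynomials vanishing on a subcube. The obstruction is that the rows are not independent coordinates, so standard net arguments over compressible vectors do not apply. I would first try a decoupling/inductive scheme: write $f_a=x_n g+h$ with $g$ of degree $d-1$. The event that all rows are annihilated then says that, on each half-cube $x_n=\pm1$, the sampled points give a dependency for $g\pm h$. Because each half-cube receives about $D/2$ samples, and $D(n-1,d-1)+D(n-1,d)$-type identities (Pascal) match these counts, the induction hypothesis for the lower-dimensional problems should apply. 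The loss incurred at each of the $d$ levels, together with the union over $\log n$ scales of sparsity, should account for the $(\log n)^{a_d+1}$ factor.
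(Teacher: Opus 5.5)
Your reduction, and your use of Meka--Nguyen--Vu on the matching number of the degree-$d$ support for the final rows, agree with the paper. There is, however, a genuine gap: nothing in your plan brings the rank to within $O(\log n)$ of $D$, and without that neither of your phases closes. The matching number is at most $n/d$, so the anti-concentration bound is never better than $r^{-1/2}\ge\sqrt{d/n}$. It can therefore be paid for at most about $\sqrt n/\mathrm{polylog}\,n$ rows, and your early phase pays it $\Theta(n^d)$ times. You could rescue that phase by bounding the common zero set of all of $W_k^\perp$ rather than of one normal (the generalized-Hamming-weight argument behind Theorem~\ref{thm:abbe}). But this stops $\Theta(n^{d-1}\log n)$ short of $D$, exactly where your late phase starts, and a per-row union over those rows costs $n^{d-3/2}\,\mathrm{polylog}\,n$, which diverges for $d\ge 2$.

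Exchangeability does not rescue the late phase either. You have $\P(M\text{ singular})\le p_{n,d}(D-1)+\P(\text{rows }1,\dots,D-1\text{ independent and }X_D\in H_D)$, and the first term is the original problem again. Averaging over which row is removed (as in Rudelson--Vershynin) gains only if the row dependency $\sum_i b_iX_i=0$ has $\Omega(D)$ nonzero coefficients with high probability. A priori it may involve only a bounded number of rows, and proving it is typically large is itself a global counting statement you have not supplied.

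The missing ingredient is Proposition~\ref{prop:firstmoment}, which gives $\P(\rank M_{K-s}<K-s)\le 2^{-s}+e^{-\Omega(n)}$. It counts all $\mathbb{F}_2$-dependencies among the first $m=K-s$ columns in expectation, $\E N_{\mathrm{dep}}=2^{-K}\sum_{q}\bigl((1+\beta(q))^m-1\bigr)$, and splits the sum three ways:
\begin{itemize}
\item $q=0$ contributes at most $2^{-s}$;
\item biases at most $1/m$ are handled by Cauchy--Schwarz and the identity $2^{-K}\sum_q\beta(q)^2=2^{-(n-1)}$;
\item larger biases are handled by the Abbe--Shpilka--Wigderson count $\tau^{-A_d n^{d-1}}$ against $(1-2^{-d})^K$.
\end{itemize}
An odd minor then transfers full rank to $\R$. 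With $s=\lceil\frac12\log_2 n\rceil$, a plain union bound over the last $s$ rows costs $s$ times the anti-concentration bound. That union bound, not sparsity scales, is the actual source of the extra $\log n$.

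Your structural step also aims at the wrong class and the wrong kind of bound. What must be excluded is every nonzero kernel polynomial whose degree-$d$ support hypergraph has small matching number. This includes $x_j g$ and all lower-degree polynomials, not just polynomials in few variables. You need this exclusion with exponentially small failure probability, uniformly over the kernel. The $(d-1)$ case of the theorem fails only with polynomially small probability, which cannot absorb any union bound.

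Your instinct of ``union over structures plus Boolean Schwartz--Zippel'' is the right one, but it should be applied to deterministic subspaces. For $|J|\le\alpha_d n$, let $\mathcal{U}_J$ be the space of polynomials whose degree-$d$ monomials all meet $J$. Its dimension is at most $K(n-1,d-1)+|J|\binom{n-2}{d-1}\le 2^{-d}K/8$, so $m_0\ge K/2$ samples are injective on it except with probability $e^{-2^{-d}m_0/8}$ (Lemma~\ref{lem:injective}). The vertex set of a maximal matching covers every edge, so a polynomial with small matching number lies in some such $\mathcal{U}_J$. A union over at most $2^n$ sets $J$ then shows that every nonzero $f\in\mathcal{W}_0$ has $\nu_d(f)\ge\kappa_d n$ simultaneously (Proposition~\ref{prop:matching}). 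That simultaneity is what makes the adaptively chosen normals in later exposures legitimate, since every later kernel lies in $\mathcal{W}_0$.

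The half-cube induction you sketch would not work. Writing $f=x_n g+h$, each of $g\pm h$ ranges over the full space of degree-$\le d$ polynomials in $n-1$ variables, of dimension $\sum_{j\le d}\binom{n-1}{j}=D(n,d)$. Each half-cube receives only about $D/2$ samples, so neither restricted system is a square instance of a lower-order problem.
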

	
	Thus $D(n,d)$ random tensor powers form a basis of their full linear span
	with high probability, whereas $D(n,d)+1$ tensor powers are always
	dependent. The sample-size bound is exact.  The special case $d=1$ of the linear independence of the $D(n,1) = n$ random vectors is equivalent to the invertibility of the $n \times n$ random matrix.  This is a fundamental question that has received much attention since Koml\'{o}s posed the problem in the late 60's \cite{MR238371, MR1260107, MR2480613, MR2557947, MR4076632, MR4356701}.  
	Thus, our result can be seen as a high-dimensional generalization of the classical non-singularity statement for random
	sign matrices.  It is surprising that even for the natural question of $d=2$, which is for random rank-1 matrices, the 
	problem was still open \cite{MR4016132}.
	Our contribution is the exact-dimensional conclusion for all fixed degrees. 
	
	There is a fascinating connection between this random tensor question and coding theory.  Abbe, Shpilka and Wigderson were led to this question of linear independence of rank-1 tensors while studying Reed-Muller codes under random erasures and errors \cite{MR3400278}. 
	\begin{theorem} \label{thm:abbe}
		Let $n,d,m, t$ be positive integers such that
		\[
		m < \sum_{j=0}^d \binom{n - \log(\sum_{i=0}^d \binom{n}{i}) - t }{j}
		\]
		Let $\bm{x}^{(1)}, \dots, \bm{x}^{(m)}$ be independent random vectors uniformly distributed in 
		$\{\pm 1\}^n$.  Then, with probability greater than $1 - 2^{-t}$, the tensors
		\[
		(\x^{(1)})^{\otimes d}, \dots, (\x^{(m)})^{\otimes d}
		\]
		are linearly independent.
	\end{theorem}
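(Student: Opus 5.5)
We prove Theorem~\ref{thm:abbe} by exposing the tensors one at a time and bounding, at each step, the probability that the new tensor falls into the span of the previous ones. Let $N=\sum_{i=0}^d\binom{n}{i}$. By the monomial reduction $x_i^2=1$, the tensor $x^{\otimes d}$ is determined linearly by the evaluation vector $(\chi_S(x))_{|S|\le d}$, where $\chi_S(x)=\prod_{i\in S}x_i$. So linear independence of the tensors is equivalent to linear independence of the vectors $v(\x^{(k)})\in\R^{N}$ restricted to characters of the correct parity. Equivalently, $\x^{(k)}$ depends on the earlier points exactly when every polynomial of degree at most $d$ (of the right parity class) vanishing on $\x^{(1)},\dots,\x^{(k-1)}$ also vanishes at $\x^{(k)}$.

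Fix $k\le m$ and condition on $\x^{(1)},\dots,\x^{(k-1)}$. Let $V$ be the space of polynomials of degree $\le d$ on $\{\pm1\}^n$ vanishing on these points. Since at most $k-1$ linear conditions are imposed, $\dim V\ge D-(k-1)$, where $D$ is the dimension of the full space. The new tensor lies in the old span iff $\x^{(k)}\in Z(V)$, the common zero set of $V$. Hence
\[
\P(\text{step }k\text{ fails})\le \E\,\frac{|Z(V)|}{2^n}.
\]
The key combinatorial input is the following: if a space of multilinear polynomials of degree $\le d$ has codimension $r$ inside the space of all degree-$\le d$ polynomials, then its common zero set in $\{\pm1\}^n$ is small unless $r$ is nearly the full dimension. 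Concretely, we use the standard fact from Reed--Muller theory: if $|Z(V)|\ge 2^{n-s}$, then restricting to a subcube of codimension about $s$ contained in (or largely inside) $Z(V)$ forces $V$ to be contained in the ideal of that subcube. That costs codimension at least $\sum_{j\le d}\binom{n-s}{j}$. The precise form we aim for is
\[
|Z(V)|\ge 2^{n-s}\ \Longrightarrow\ \operatorname{codim}V\ge \sum_{j=0}^{d}\binom{n-s}{j}.
\]
This is the step I expect to be the main obstacle. The cleanest route I would try first is an entropy/shifting argument. Any set $Z\subseteq\{\pm1\}^n$ with $|Z|\ge 2^{n-s}$ is, by down-shifting (compression), comparable to a down-set. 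A down-set of that size contains a subcube of dimension $n-s$ up to the usual $\log N$ loss. The evaluation map from degree-$\le d$ polynomials onto $Z$ has rank at least that of its restriction to such a shifted set, and that rank is $\sum_{j\le d}\binom{n-s}{j}$. The $\log N$ in the hypothesis is exactly the slack that absorbs this loss.

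With this in hand, take $s=\log N+t$. Since $\operatorname{codim}V\le k-1<m<\sum_{j\le d}\binom{n-s}{j}$, the contrapositive gives $|Z(V)|<2^{n-s}$. Each step therefore fails with probability less than $2^{-s}=2^{-t}/N$. A union bound over the $k\le m\le N$ steps gives total failure probability below $2^{-t}$, which proves the theorem.
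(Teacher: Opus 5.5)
Your outer argument is sound and is the scheme the paper attributes to Abbe--Shpilka--Wigderson (the paper itself only cites the result). You expose the points one at a time, note that the conditional failure probability at step $k$ is $|Z(V)|/2^n$, and union-bound over at most $N$ steps with $s=\log N+t$. The gap is the key lemma, which carries all the content: every $Z$ with $|Z|\ge 2^{n-s}$ has degree-$\le d$ evaluation rank at least $\sum_{j\le d}\binom{n-s}{j}$. Your route to it breaks at the subcube step. Down-compression is harmless, since one can check that it never increases the evaluation rank. But a down-set of size $2^{n-s}$ need not contain any subcube of dimension near $n-s$. For example, $\{T\subseteq[n]:|T|\le n/2\}$ has at least $2^{n-1}$ members, yet its largest subcube has dimension $\lfloor n/2\rfloor$, far below $n-s-\log N$ for large $n$. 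Its evaluation rank is nonetheless full. ``Largely inside'' a subcube does not help either, because bounding the rank of a large subset of a cube is the same problem again. The right invariant is different: for a down-set, the evaluation rank equals the number of its members of size at most $d$. So you must show that a down-set of size $\ge 2^{n-s}$ has at least $\sum_{j\le d}\binom{n-s}{j}$ members of size $\le d$. This is a genuine Kruskal--Katona/Wei-type fact; the paper points to generalized Hamming weights of Reed--Muller codes. It must also hold with \emph{no} loss, because the $\log N$ in the hypothesis is already consumed by your final union bound ($2^{-s}=2^{-t}/N$ per step, over at most $N$ steps). Saying it ``absorbs'' a loss in the lemma counts it twice; with such a loss you would only reach $m<\sum_{j\le d}\binom{n-2\log N-t}{j}$.

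There is also an off-by-one problem in the model you chose. On $\{\pm1\}^n$ with parity-restricted characters, the ambient dimension is $\sum_{j\le d}\binom{n-1}{j}$, and your ``precise form'' is false there. Take $Z=\{x:x_1=\cdots=x_s=x_n\}$. It is the common zero set of its own annihilator and has $2^{n-s}$ points. Under $y_i=x_ix_n$ it is the two-to-one preimage of an $(n-1-s)$-dimensional subcube, so its rank is only $\sum_{j\le d}\binom{n-1-s}{j}$. For $d=s=1$ it is the hyperplane $x_1=x_n$, of rank $n-1<n$. The lemma holds as you wrote it on $\{0,1\}^n$ with all monomials of degree $\le d$. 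In the $\pm1$ model it holds only with $n-1$ in place of $n$. With that correction, the uniform per-step bound $2^{-s}$ no longer gives the stated range of $m$, so the final count must be redone. For instance, sum the step-dependent bounds $|Z(V_k)|/2^n$ over $k$ instead of charging every step the worst case.
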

	
	\begin{remark}
		For fixed $d$, the above result correctly captures the leading term of the dimension threshold, but is roughly $n^{d-1} \log n$ short of the exact threshold.  For example, specializing to $d =1$, the result says that an $n \times m$ random matrix has full rank with high probability for $m < n - \Omega(\log n)$. This is sub-optimal in the dimension and really does not capture the difficulty of singularity problem.   On the other hand, Theorem \ref{thm:abbe} applies even when $d$ varies with $n$, which is outside the scope of our main result.
	\end{remark}
	
	In the next few subsections, we sketch the key ideas of the proof of Theorem \ref{thm:main}.
	
	\subsection{Polynomial evaluations and a near-square rank estimate}
	The first, important reduction is to convert the tensor problem into the language of Boolean analysis. 
	For $S \subset [n-1]$, we write
	\[
	\chi_S(y)=\prod_{j\in S}y_j
	\]
	and we define
	\[
	\mathcal{P}_{d}=\text{span}\{\chi_S:|S|\leq d\},\qquad
	\Psi_d(y)=(\chi_S(y))_{|S|\leq d}.
	\]
	The change of variables $y_i=x_ix_n$ gives $x=x_n(y,1)$, which can be seen as a dehomogenization procedure.
	After a nonzero scalar rescaling and deletion of repeated coordinates, $x^{\otimes d}$ is therefore represented by $\Psi_d(y)$. The transformed samples remain independent and uniform. This identifies the tensor problem with full row rank of the evaluation matrix
	\[
	M_m=\bigl(\chi_S(\bm{y}^{(i)})\bigr)_{1\leq i\leq m,\ |S|\leq d}
	\in\R^{m\times D(n,d)}.
	\]
	At $m=D(n,d)$, invertibility means that $D(n,d)$ random evaluations uniquely
	determine every function in $\mathcal{P}_{n-1,d}$, the space of polynomials with degree less than or equal to $d$. This conversion to polynomials is motivated by the polynomial-threshold-function approach of
	Baldi and Vershynin \cite{MR4016132}.
	
	The first crucial result is an estimate on the rank when $M_m$ is nearly square. For $z\in\mathbb{F}_2^{n-1}$, define
	\[
	\Phi_d(z)=(z^S)_{|S|\leq d},\qquad z^S=\prod_{j\in S}z_j,
	\]
	All products here are taken in $\mathbb{F}_2$. 
	
	\begin{proposition} \label{prop:firstmoment}
		Fix $d\geq1$. There are $C_d,c_d>0$ such that, for all sufficiently large
		$n$, all $1\leq m\leq D(n,d)$, and independent uniform $\bm{z}^{(1)},\ldots,\bm{z}^{(m)}\in\mathbb{F}_2^{n-1}$,
		\begin{equation}\label{eq:intronearsquare}
			\Prob\left(\rank_{\mathbb{F}_2}
			[\Phi_d(\bm{z}^{(1)})\ \cdots\ \Phi_d(\bm{z}^{(m)})]<m\right)
			\leq 2^{-(D(n,d)-m)}+C_dm2^{-(n-1)/2}+e^{-c_d(n-1)^d}.
		\end{equation}
		The same bound holds for failure of full row rank of $M_m$ over $\R$.
	\end{proposition}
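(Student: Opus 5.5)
The plan is a first-moment (union bound) argument over $\mathbb{F}_2$, in which we count candidate dependencies by their supports and use Fourier analysis on the code $\mathcal{R}=\{(f(z))_z : \deg f\leq d\}$, the Reed--Muller code $RM(n-1,d)$, whose dimension is $D:=D(n,d)$.

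\emph{Reduction to supports.} The matrix has rank $<m$ exactly when there is a nonempty $T\subseteq[m]$ with $\sum_{i\in T}\Phi_d(\bm z^{(i)})=0$. This happens if and only if $\sum_{i\in T}f(\bm z^{(i)})=0$ for every $f$ of degree $\leq d$.

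\emph{Fourier formula for a fixed $T$.} Averaging the characters $(-1)^{\sum_{i\in T}f(\bm z^{(i)})}$ over $f\in\mathcal{R}$ detects this event, and independence then gives
\[
\Prob\Bigl(\textstyle\sum_{i\in T}\Phi_d(\bm z^{(i)})=0\Bigr)=2^{-D}\sum_{f\in\mathcal{R}}\beta_f^{|T|},\qquad \beta_f=\E(-1)^{f(\bm z)}.
\]

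\emph{Summing over $T$.} Summing over nonempty $T$ gives
\[
\Prob(\rank<m)\leq 2^{-D}\sum_{f\in\mathcal{R}}\bigl[(1+\beta_f)^m-1\bigr].
\]
The term $f=0$ contributes $2^{-D}(2^m-1)\leq 2^{-(D-m)}$, which is the first term of \eqref{eq:intronearsquare}. The term $f=1$ contributes $-2^{-D}\leq 0$.

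\emph{Pairing the remaining terms.} We group the remaining $f$ in pairs $\{f,f+1\}$. Each pair contributes
\[
2^{-D}\bigl[(1+\beta)^m+(1-\beta)^m-2\bigr]\leq 2^{-D}\bigl[e^{m|\beta|}+e^{-m|\beta|}-2\bigr],
\]
and this is of order $2^{-D}m^2\beta^2$ when $m|\beta|\leq1$ and at most $2^{-D}e^{m|\beta|}$ otherwise.

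We split nonconstant $f$ by bias level and insert weight-distribution bounds for Reed--Muller codes. I would use the Kaufman--Lovett--Porat estimate, or the sharper Samorodnitsky / Abbe--Shpilka--Wigderson bounds: the number of $f\in\mathcal{R}$ with $|\beta_f|\geq 2^{-\ell}$ is at most $2^{O_d(\ell\, n^{d-1})}$ for $\ell\leq d$, with refinements near bias $0$. Every nonconstant $f$ has $|\beta_f|\leq 1-2^{1-d}$, since its weight is at least $2^{n-1-d}$.

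\emph{High- and moderate-bias codewords.} For codewords of bias bounded away from $0$, the factor $2^{-D}$ beats both the count $2^{O(n^{d-1})}$ and $e^{m|\beta|}\leq e^{(1-2^{1-d})m}$ only after some care. Here I would use $(1+\beta)^m\leq 2^{m}(1-2^{-d})^{m}$ together with $m\leq D$, so the pair contribution is at most $(1-2^{-d})^m\cdot 2^{O(n^{d-1})}$. This is $e^{-c_d n^d}$ when $m\geq c n^d$. When $m$ is smaller, $2^{-D}2^m$ is already tiny.

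\emph{Low-bias codewords.} Codewords with $|\beta_f|\leq 2^{-(n-1)/2}$ form most of $\mathcal{R}$, so their total contribution is at most $O(m^2 2^{-(n-1)})\leq C_d m 2^{-(n-1)/2}$, because $m\leq n^d\ll 2^{(n-1)/2}$.

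\emph{Intermediate bias and a direct small-$T$ argument.} The intermediate range $2^{-(n-1)/2}\leq|\beta|\leq 2^{-d}$ is where I expect the main obstacle. Here the counts $2^{O(\ell n^{d-1})}$ must be balanced against $e^{m 2^{-\ell}}$, which needs a sufficiently strong weight-distribution estimate near bias $0$. If this balancing fails for small $|T|$, I would first handle $|T|\leq k_0$ directly. A dependency among $k$ points must in particular satisfy the linear relations $\sum_{i\in T}\bm z^{(i)}=0$ with $|T|$ even. For $k=2$ this forces $\bm z^{(i)}=\bm z^{(j)}$, of probability $2^{-(n-1)}$ per pair, so the total is $m^22^{-(n-1)}\leq m2^{-(n-1)/2}$; small $k$ more generally costs about $2^{-(n-1)}$ per extra point. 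I would then apply the Fourier bound only to $|T|>k_0$, where $\beta^{|T|}$ is much smaller.

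\emph{Transfer to $\R$.} With $y_j=(-1)^{z_j}$, we have $z^S=\prod_{j\in S}(1-y_j)/2$. This is an invertible triangular (Möbius) change of basis over $\mathbb{Q}$ between $\{z^S\}_{|S|\leq d}$, viewed as $0/1$ integers, and $\{\chi_S\}_{|S|\leq d}$, so $M_m$ has the same real rank as the integer $0/1$ matrix $(z^S(\bm z^{(i)}))$. If that matrix has full rank mod $2$, some $m\times m$ minor is odd, hence nonzero over $\mathbb{Z}$. So full $\mathbb{F}_2$ rank implies full real row rank, and the same bound holds for $M_m$.
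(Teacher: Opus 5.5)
\paragraph{Overall.} Your outline has the same skeleton as the paper's proof. You count dependency supports, use the character formula to get $\E N_{\mathrm{dep}}=2^{-D}\sum_f[(1+\beta_f)^m-1]$, take $2^{-(D-m)}$ from $f=0$, invoke the Abbe--Shpilka--Wigderson count for large bias, and transfer to $\R$ via the triangular change of basis and an odd minor. Pairing $f$ with $f+1$, rather than discarding the terms with $\beta_f\le 0$, is a harmless variant.

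\paragraph{The gap.} The problem is exactly the step you flag and leave open. You control only the codewords with $|\beta_f|\le 2^{-(n-1)/2}$, using the pointwise bound $\beta_f^2\le 2^{-(n-1)}$ times the trivial count $2^D$. For $2^{-(n-1)/2}<|\beta_f|\le 1/m$ you have no usable estimate. Near $\tau=2^{-(n-1)/2}$ the weight-distribution count $(1/\tau)^{A_d(n-1)^{d-1}}$ is of size $2^{\Theta(n^d)}$, and $2^{-D}$ need not absorb it. Your small-$|T|$ fallback is not carried out: no $k_0$ is fixed and no bias range is assigned to a bound. So as written the proof does not close.

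\paragraph{The missing ingredient.} What you need is the collision identity
\[
2^{-D}\sum_{f}\beta_f^2=\Prob\bigl(\Phi_d(\bm z)=\Phi_d(\bm z')\bigr)=\Prob(\bm z=\bm z')=2^{-(n-1)},
\]
which holds because the degree-one coordinates make $\Phi_d$ injective. It bounds the \emph{total} mass of the small biases, not just each one individually. Every pair with $m|\beta_f|\le 1$ contributes $O(2^{-D}m^2\beta_f^2)$, so together they contribute $O(m^2 2^{-(n-1)})\le C_d m 2^{-(n-1)/2}$, with no cutoff at $2^{-(n-1)/2}$. The paper does the same thing using $(1+u)^m-1\le emu$ and Cauchy--Schwarz.

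\paragraph{The remaining range.} After this, only $|\beta_f|>1/m$ is left, and nothing delicate happens there. Lemma~\ref{lem:count} holds for every $\tau\in(0,1/2]$, not just $\tau\ge 2^{-d}$. So over $O(\log m)$ dyadic classes the count is at most $m^{A_d(n-1)^{d-1}}=e^{O_d(n^{d-1}\log n)}$. Each such term is at most $2^{-D}(2-2^{1-d})^m\le(1-2^{-d})^{D}$, since $m\le D$. This gives the $e^{-c_d(n-1)^d}$ term, and there is no balancing near bias $0$ to perform.
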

	
	For fixed degree, any additive deficit $D(n,d)-m$ tending to infinity is	therefore sufficient for high-probability independence. In particular,
	$m=D(n,d)-\lceil A\log_2 n\rceil$ gives failure probability $O_{d,A}(n^{-A})$ for every fixed $A>0$. The statement about rank over $\R$ follows from an invertible change of monomial basis and the fact that a nonzero minor modulo $2$ is nonzero over $\R$.
	
	A precursor to this result is in \cite{MR3400278}. Their column-independence theorem $\Theta_{d,A}(n^{d-1}\log n)$ short of the dimension threshold.
	Thus, for $d\geq2$, Proposition \ref{prop:firstmoment} improves the additive deficit substantially from order $n^{d-1}\log n$ to order $\log n$ at polynomially small failure probability.

	\subsection{From a global dependence count to the last sample}
	The mechanism behind \eqref{eq:intronearsquare} is a subtle first-moment
	calculation. Let $\bm{N}_{\mathrm{dep}}$ count the non-zero binary relations among the sampled columns, and let $\mathcal{Q}_{n-1,d}$ be the space of multilinear $\mathbb{F}_2$-polynomials of degree at most $d$. For
	$\beta(q)=\E_{\bm{z}\sim\mathbb{F}_2^{n-1}}[(-1)^{q(\bm{z})}]$, character orthogonality gives
	\begin{equation}\label{eq:introfirstmomentidentity}
		\E\bm{N}_{\mathrm{dep}}
		=2^{-K}\sum_{q\in\mathcal{Q}_{r,d}}\bigl((1+\beta(q))^m-1\bigr).
	\end{equation}
	This sum can be controlled by dividing into three cases.
	The zero polynomial contributes at most $2^{m-D(n,d)}$, small positive biases are controlled by their second moment, and large positive biases are counted using the weight-distribution estimate of Abbe, Shpilka and Wigderson~\cite{MR3400278}.

	To complete the argument, we require information not supplied by the dimension of the kernel alone. Once $m_0=D(n,d)-s$ samples have been exposed, with
	$s \approx \log n$, we let
	\[
	\mathcal{W}_0=\{f\in\mathcal{P}_{n-1,d}:f(\bm{y}^{(i)})=0\text{ for }i\leq m_0\}.
	\]
	The near-square estimate gives $\dim\mathcal{W}_0=s$ with high probability.
	We also prove, outside an event of probability $e^{-c'_d n^d}$, that \emph{every} nonzero $f\in\mathcal{W}_0$ has at least $\kappa_d n$ nonzero coefficients in its degree-$d$ part $f^{=d}$ supported on pairwise
	disjoint sets of variables. This simultaneous statement is essential.
	The polynomials selected during the later exposures depend on the previous samples.
	
	To establish this structure, we observe that a small matching of highest-degree monomials would place the polynomial in a subspace whose degree-$d$ terms all meet one small coordinate set. Each such deterministic subspace has small dimension compared with the number of initial samples. 
	A non-zero degree-at-most-$d$ polynomial is non-zero on at least a $2^{-d}$ fraction of the cube, so enough
	independent evaluations annihilate the common kernel on each subspace with exponentially small failure probability. A union bound over the coordinate sets excludes all these subspaces at once. This converts a
	statement about fixed spaces into a uniform structural statement about the random kernel.
	
	Finally, the polynomial anti-concentration theorem of Meka, Nguyen, and
	Vu \cite{MR3542863} then bounds the probability that a nonzero polynomial with this structure vanishes at a fresh, random point by $C_d(\log n)^{a_d}/\sqrt n$. Every later kernel is contained in $\mathcal{W}_0$, so the structural conclusion still holds. Each newly exposed sample therefore reduces the kernel dimension. A union bound over the final $s$ samples gives \eqref{eq:main-bound}. The additional logarithm comes from this union
	bound. The two parts of the argument address different obstructions.  The first controls excess kernel dimension, while the second prevents the remaining real kernel from surviving the last evaluations.

	\subsection{Geometric consequences}
	We mention two applications of the special case of $d=2$. The first concerns simplicial faces of the set of correlation matrices, whose structure was studied by Laurent and Poljak \cite{laurent1996facial} and whose random construction was developed by Tropp \cite{tropp2018simplicial}. Our quadratic theorem shows that random sign vertices produce faces of the largest possible simplicial dimension with high probability. It reaches the exact geometric threshold, removing the oversampling in earlier guarantees. 
	
	For correlation matrices, these simplicial faces identify portions of the cut polytope preserved exactly by its semidefinite relaxation and underlie existing sign-component recovery algorithms. Our quadratic theorem shows that random sign vertices generate such faces at the exact geometric dimension threshold, yielding maximum-dimensional simplicial faces with high probability.  The cut polytope is a fundamental object in combinatorial optimization, but is computationally difficult to work with directly so understanding properties of its semidefinite relaxation is of great importance in theory and practice.
	
	The second application is to matrix factorization, which is a fundamental method in statistics and data analysis.
	We extend the range in which Kueng--Tropp's sign-component recovery theorem applies \cite{kueng2021binary}, which is a foundational result in the theory of existence and uniqueness of factorizations.

	\subsection{Conjectured rate and further work}
	
	Using the conjectured polynomial anti-concentration \cite{MR3542863}, for which the quadratic case was recently resolved \cite{kwan2025resolution}, the probability bound in Theorem \ref{thm:main} can be reduced to $O_d(\log n/ \sqrt{n})$.  However, motivated by the heuristic in random matrix theory, for which the conjecture is that the probability of matrix singularity for Rademacher random matrices is dominated by the probability of having two linearly dependent rows or columns, we conjecture that 
	\[
	p_{n,d} (D(n,d)) = 2^{-n + o(n)}.
	\]
	Of course, the lower bound in this case is that two rank-1 tensors are equal up to sign.  
	
	An important follow-up question is to quantify the linear independence of the boolean tensors. In particular, are
	random tensors \emph{well conditioned}?  This would extend the work of Vershynin \cite{MR4140540} to the dimension threshold.
	In many applications, being well-conditioned translates to robustness, stability and running-time bounds of algorithms \cite{MR4438378, anderson2014more, anari2018smoothed}. 
	
	It would also be of interest to see if the dimension threshold can be reached when $d$ can grow with $n$.     
	
	\subsection{Organization of the remainder}
	In Sections \ref{sec:dimension} we explain the relevant tensor dimensions and the origin of $D(n,d)$.   A brief summary of the notation and some basic results of Boolean analysis are given in Sections \ref{sec:fourieroverR} and \ref{sec:fourieroverF}.  Sections \ref{sec:reduction}, \ref{sec:nearsquare}, \ref{sec:lowdim}, \ref{sec:kernel} and \ref{sec:lastexposure} contain the proof of the main theorem divided into key steps. Finally, Section \ref{sec:applications} contains the details of the geometric consequences of Theorem \ref{thm:main}.

	\section{Relevant Dimensions} \label{sec:dimension}
	We now explain the origin of $D(n,d)$. 
	The ambient symmetric tensors space $\text{Sym}^d(\R^n)$ has dimension 
	\[
	N(n,d) = \binom{n+d-1}{d}
	\]
	as can easily be seen by identifying symmetric order-$d$ tensors with 
	homogeneous polynomials.  However, on the Boolean cube, one has the relation
	$x_i^2 = 1$ so for a multi-index $\alpha = (\alpha_1, \dots, \alpha_n)$ with 
	$\sum_i \alpha_i = d$, 
	\[
	x^\alpha = \prod_{i: \alpha_i \text{ odd}} x_i = \chi_{S(\alpha)}(x).
	\]
	where
	\[
	S(\alpha) := \{i: \alpha_i \text{ is odd} \}
	\]
	and
	\[
	\chi_S(x) = \prod_{i \in S} x_i.
	\]
	Since $\alpha_1 + \dots + \alpha_n = d$ the number of 
	odd exponents has the same parity as $d$.  Conversely, every subset 
	$S \subset [n]$ satisfying the above conditions occurs.  One way this can be 
	accomplished is by setting the exponent to 1 for every coordinate in $S$
	and then choosing the exponent $d - |S|$, which is even, for some coordinate outside of $S$.   Thus, this restricted set of monomials spans the relevant space.  Its dimension is
	\begin{align*}
		D(n,d) &:= \sum_{\substack{0 \leq k \leq d \\ k \equiv_2 d}} \binom{n}{k} \\
		&= \sum_{\substack{0 < k \leq d\\ k \equiv_2 d}} \binom{n-1}{k}
		+ \sum_{\substack{0 < k \leq d\\ k \equiv_2 d}} \binom{n-1}{k-1} \\
		&= \sum_{\substack{0 < j \leq d\\ j \equiv_2 d}} \binom{n-1}{j}
		+ \sum_{\substack{0 < j \leq d\\ j+1 \equiv_2 d}} \binom{n-1}{j} \\
		&= \sum_{j=0}^{d} \binom{n-1}{j} \\
	\end{align*}
	
	As a more concrete illustration of this dimension, when $d = 2$, $D(n,2) = \sum_{j=0}^2 \binom{n-1}{j} = 1 + \binom{n}{2}$.  This can 
	easily be seen since for $x \in \{\pm 1\}^n$, the matrix $x^{\otimes 2} = x x^\top$ has diagonal 
	entries equal to one so the only non-trivial coordinates are the $\binom{n}{2}$ pairwise products
	$x_i x_j$ for $i < j$ and the constant diagonal coordinate.  
	
	\section{Basic Fourier Analysis of Boolean Functions over $\R$} \label{sec:fourieroverR}
	We follow the conventions of \cite[Sections 1.2-1.5]{o2014analysis}.  For $S \subset [r]$, we let
	\[
	\chi_{S}(y) = y^S = \prod_{i \in S} y_i, \quad y \in \{\pm 1\}^r
	\]
	and by convention $\chi_\emptyset = 1$.  The inner product is normalized by the uniform measure so that
	\[
	\langle f, g \rangle = \E_{y \sim \{\pm 1\}^r} [f(y) g(y)]  = 2^{-r} \sum_{y \in \{\pm 1\}^r} f(y) g(y).  
	\]
	The characters $\chi$ form an orthonormal basis.  Thus, 
	\[
	f(y) = \sum_{S \subset [r]} \hat{f}(S) \chi_S(y), \quad \text{where} \quad \hat{f}(S) = \langle f, \chi_S \rangle.  
	\]
	Note that in particular,
	\[
	\hat{f}(\emptyset) = \E[f], \quad \|f\|_2^2 = \sum_{S \subset [r]} \hat{f}(S)^2.
	\]
	We let
	\[
	f^{=k} = \sum_{|S| = k} \hat{f}(S) \chi_S \quad \text{and} \quad f^{\leq d} = \sum_{|S| \leq d} \hat{f}(S) \chi_S.
	\]
	For non-zero $f$, its Fourier degree is $\deg(f)  = \max\{|S|: \hat{f}(S) \neq 0\}$.  
	We also let
	\[
	\mathcal{P}_{r,d} = \{f: \{\pm 1\}^r \rightarrow \R: f = f^{\leq d}\}
	= \text{span}_\R \{\chi_S: |S| \leq d\}
	\]
	and set 
	\[
	K(r,d) = \dim \mathcal{P}_{r,d} =  \sum_{j=0}^{r} \binom{r}{j}.
	\]

	\section{Boolean analysis over $\mathbb{F}_2$} \label{sec:fourieroverF}
	In this section we fix the notation needed for the estimates from coding theory \cite{MR3400278}.
	We distinguish the Fourier degree from the previous section from degree over $\mathbb{F}_2$.  Let
	\[
	\mathcal{Q}_{r,d} = \{q: \mathbb{F}_2^r \rightarrow \mathbb{F}_2: \deg_{\mathbb{F}_2}(q) \leq d\}.
	\]
	Each $q \in \mathcal{Q}_{r,d}$ has a unique multilinear representation
	\[
	q(z) = \sum_{\substack{S \subset [r] \\ |S| \leq d}} c_S(q) z^S
	\]
	where $c_S(q) \in \mathbb{F}_2$ and $z^S =  \prod_{i \in S} z_i$.
	
	As in \cite{o2014analysis}, we use the sign encoding
	\[
	\chi(b) = (-1)^b \text{ for } b \in \mathbb{F}_2
	\]
	and
	\[
	\chi(z) = (\chi(z_1), \dots, \chi(z_r)) \in \{\pm 1\}^r.
	\]
	Thus, $y = \chi(z)$ is equivalent to $y_i = 1 - 2z_i$ when $z_i$ is represented by $0$ or $1$ in $\R$.
	
	On $\mathbb{F}_2^K$, the parity character is 
	\[
	\chi_S(z) = (-1)^{\sum_{i \in S} z_i} =\chi_S(\chi(z)).
	\] 
	For $v \in \mathbb{F}_2^K$ and a dual index $\gamma \in \widehat{\mathbb{F}_2^K}$, we likewise write $\chi_\gamma(v) = (-1)^{\gamma \cdot v}$ where the dot product is over $\mathbb{F}_2$.  These 
	sign-valued characters must not be confused with the $\mathbb{F}_2$-valued monomials $z^S$.

	\section{Reduction to rank of Fourier evaluation matrix} \label{sec:reduction}
	We first convert this tensor problem into a question about the 
	rank of a random Fourier-evaluation matrix.  
	For deterministic points $y^{(1)}, \dots, y^{(m)} \in \{\pm 1\}^r$ define
	\[
	\mathsf{Ev}_m: \mathcal{P}_{r,d} \rightarrow \R^m \quad \text{ by } \quad \mathsf{Ev}_m f = (f(y^{(1)}), \dots, f(y^{(m)})).
	\]
	We let 
	\[
	\Psi_d(y_i) := (\chi_S(y_i))_{|S| \leq d} \in \R^{K(r,d)}.
	\]
	In the Fourier basis, the matrix of $\mathsf{Ev}_m$ is
	\[
	M_m = (\chi_S (y^{(i)}))_{i,S} =
	\left[ \begin{array}{c}  \Psi_d(y^{(1)})^{\mathsf{T}} \\
		\vdots \\
		\Psi_d(y^{(m)})^{\mathsf{T}} \end{array}\right] \in \R^{m \times K(r,d)}.
	\]

	We now reduce linear independence of the tensor powers to full row rank of Fourier-evaluation matrix, $M_m$. This viewpoint is motivated by the polynomial lift used by Baldi and Vershynin \cite[Sections 3--4]{MR4016132}
	to study polynomial threshold functions and random tensors, and by the monomial evaluation matrices underlying Reed--Muller codes \cite{MR3400278}.

	\begin{lemma} \label{lem:reduction}
		Let $x^{(1)}, \dots, x^{(m)} \in \{\pm 1\}^n$.
		Set
		\[
		y^{(k)} = (x_{1}^{(k)} x_{n}^{(k)}, \dots, x_{n-1}^{(k)} x_{n}^{(k)}) \in \{\pm 1\}^{n-1}.
		\]
		Then 
		\[
		\dim \text{span}(\x_1^{\otimes d}, \dots, \x_m^{\otimes d}) = \rank[\Psi_d(y^{(1)}) \dots \Psi_d(y^{(m)})].
		\]
	\end{lemma}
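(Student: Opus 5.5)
The plan is to show that the two families of vectors are related by a fixed linear map that is injective on the relevant span, combined with nonzero scalar rescalings of the individual vectors. Neither operation changes the rank.

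\textbf{Step 1: coordinates of $x^{\otimes d}$.} For $x\in\{\pm1\}^n$, the entry of $x^{\otimes d}$ at $(i_1,\dots,i_d)$ is $x^{\alpha}$, where $\alpha$ is the multiplicity vector of the index tuple. By Section \ref{sec:dimension}, this entry equals $\chi_{S(\alpha)}(x)$, and $S=S(\alpha)$ ranges over all $S\subset[n]$ with $|S|\le d$ and $|S|\equiv d \pmod 2$. Let $\mathcal{S}$ denote this index set. Define the linear map $T:\R^{\mathcal{S}}\to(\R^n)^{\otimes d}$ by placing coordinate $S(\alpha)$ in position $(i_1,\dots,i_d)$. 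Then $x^{\otimes d}=T\,v(x)$ with $v(x)=(\chi_S(x))_{S\in\mathcal{S}}$. Every $S\in\mathcal{S}$ is realized by some index tuple, so $T$ copies each coordinate at least once and is therefore injective. Hence $\dim\operatorname{span}\{(x^{(k)})^{\otimes d}\}=\rank[v(x^{(1)})\cdots v(x^{(m)})]$.

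\textbf{Step 2: dehomogenization.} Write $x=x_n(y,1)$ with $y_i=x_ix_n$. For $S\in\mathcal{S}$, set $S'=S\setminus\{n\}\subset[n-1]$. Then
\[
\chi_S(x)=x_n^{|S|}\chi_{S'}(y)=x_n^{d}\chi_{S'}(y),
\]
since $|S|\equiv d \pmod 2$ and $x_n^2=1$. The map $S\mapsto S'$ is a bijection from $\mathcal{S}$ onto $\{S'\subset[n-1]:|S'|\le d\}$. Its inverse sends $S'$ to $S'$ if $|S'|\equiv d$, and to $S'\cup\{n\}$ otherwise; in the second case $|S'|+1\le d$ holds automatically by parity. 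Consequently $v(x)=x_n^d\,P\,\Psi_d(y)$, where $P$ is a fixed permutation matrix.

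\textbf{Step 3: rank is preserved.} Multiplying each column by the nonzero scalar $(x^{(k)}_n)^d=\pm1$ and applying the invertible matrix $P$ both leave the rank unchanged. This gives $\rank[v(x^{(k)})]_k=\rank[\Psi_d(y^{(k)})]_k$, and combining with Step 1 proves the lemma.

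The only step requiring care is the parity bookkeeping in Step 2: checking that $S\mapsto S'$ is a bijection onto the sets of size at most $d$, and that the sign factor $x_n^{|S|}$ is the same for every coordinate of a given vector. The latter is essential, since a coordinate-dependent sign would amount to a non-scalar transformation of each column rather than a uniform rescaling.
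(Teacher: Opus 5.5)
Your proof is correct and is essentially the paper's argument. The paper writes $x_n^{-d}x^{\otimes d}=A\Psi_d(y)$ directly for an injective coordinate-copying map $A$ (with a coordinate projection as left inverse) and then rescales the columns by $x_n^d=\pm 1$; you factor the same map as $T$ composed with the permutation $P$, passing through $v(x)$ indexed by the parity-restricted subsets of $[n]$, and your checks that $S\mapsto S\setminus\{n\}$ is a bijection and that $x_n^{|S|}=x_n^{d}$ uniformly are sound.
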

	\begin{proof}
		Let $x \in \{\pm 1\}^n$ be fixed and set $y_i = x_i x_n$ for $ i < n$.  A tensor coordinate
		is indexed by a $d$-tuple $(i_1, \dots, i_d)$.  If $\alpha_j$ denotes the number of occurrences
		of $j < n$ in this tuple, then
		\[
		x_{i_1} \dots x_{i_d} = x_n^d \prod_{j=1}^{n-1} y_j^{\alpha_j} = x_n^d \chi_{S(\alpha)}(y)
		\]
		where $S(\alpha) = \{j< n: \alpha_j \text{ is odd}\}$. 
		Thus, every coordinate of $x_n^{-d} x^{\otimes d}$ is a coordinate of $\Psi_d(y)$.  
		
		Conversely, every $\chi_S(y)$ with $|S| \leq d$ occurs: take one factor $x_j$ for each $j \in S$ and let the remaining $d - |S|$ positions be filled with $x_n$.  Thus, there is a fixed linear map 
		\[
		A: \R^{K(n-1,d)} \rightarrow \R^{n^d}
		\]
		such that
		\[
		x_n^{-d} x^{\otimes d} = A \Psi_d(y).  
		\]
		A coordinate projection $B: \R^{n^d} \rightarrow \R^{K(n-1,d)}$ 
		that selects one representative coordinate for each $S$ satisfies $BA = I$, so $A$ is injective.
		Applying $A$ to the columns preserves their rank as does multiplying the $k$-th column by the non-zero scalar $(x_n^{(k)})^{-d}$.  
		This proves the statement.  
	\end{proof}
	
	Note that as $y$ ranges over $\{\pm 1\}^{n-1}$, the vectors $\Psi_d(y)$ span $\R^{K(n-1,d)}$ since the 
	characters are independent as functions. 
	If $\x$ is uniform on the hypercube $\{\pm 1\}^n$ then
	\[
	(\x_1 \x_n, \dots, \x_{n-1} \x_n)
	\]
	is uniform on $\{\pm 1\}^{n-1}$.  We have therefore reduced the random tensor problem to 
	a random character-evaluation problem.
	
	\begin{theorem} \label{thm:characterevaluation}
		Fix $d \geq 1$, let $n \rightarrow \infty$, set $K = K(n-1,d)$ and define $a_d = B d \log d$, where $B$ is from Theorem \ref{thm:main}.  If
		$Y_1, \dots, Y_K$ are independent uniform points on $\{\pm 1\}^{n-1}$ then
		\[
		\P(M_{K} \text{ is singular}) \leq C_d \frac{(\log n)^{a_d + 1}}{\sqrt{n}}.  
		\]
		The same bound applies for any $m \leq K$.  
	\end{theorem}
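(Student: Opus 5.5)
We follow the two-stage strategy from the introduction. First we reduce to the square case: if $M_m$ fails to have full row rank for some $m\le K$, then so does $M_K$, because its first $m$ rows are dependent. So it is enough to bound $\P(M_K \text{ singular})$.

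\textbf{Stage 1: exposing the first $m_0$ samples.} Set $s=\lceil A\log_2 n\rceil$ and $m_0=K-s$. Expose $Y_1,\dots,Y_{m_0}$ and let
\[
\mathcal{W}_0=\{f\in\mathcal{P}_{n-1,d}: f(Y_i)=0,\ i\le m_0\}.
\]
By the real version of Proposition \ref{prop:firstmoment}, $M_{m_0}$ has full row rank outside an event of probability $O_{d}(n^{-A})+Km2^{-(n-1)/2}+e^{-c_d n^d}$. On that event $\dim\mathcal{W}_0=s$.

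\textbf{Stage 1, structural claim.} Outside an event of probability $e^{-c'_d n^d}$, every nonzero $f\in\mathcal{W}_0$ satisfies the following: $f^{=d}$ has at least $\kappa_d n$ nonzero coefficients on pairwise disjoint $d$-sets. We argue as follows.
\begin{itemize}
\item Suppose $f^{=d}$ has a maximal matching of size less than $\kappa_d n$. Then its vertex set $T$ has $|T|\le d\kappa_d n$, and every degree-$d$ monomial of $f$ meets $T$.
\item So $f$ lies in the deterministic subspace $V_T$ spanned by all $\chi_S$ with $|S|<d$ together with those $|S|=d$ meeting $T$. Its dimension is $D_T\le K-\binom{n-1-|T|}{d}\le K-c n^d$ when $\kappa_d$ is small. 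Here we must also handle $\deg f<d$; this is included in $V_T$.
\item For fixed $T$, we need that $m_0$ random evaluations leave no nonzero element of $V_T$ in the kernel. Expose the points one at a time. While the common kernel inside $V_T$ is nonzero, pick any nonzero $g$ in it. By the Schwartz--Zippel-type fact that $g\ne 0$ on at least a $2^{-d}$ fraction of the cube, the next point cuts the dimension with probability at least $2^{-d}$.
\item The number of successes in $m_0$ trials dominates a $\mathrm{Bin}(m_0,2^{-d})$ variable. Its mean $2^{-d}K$ far exceeds $D_T$ only if $D_T\le 2^{-d-1}K$, which fails here. So we refine: a Chernoff bound for needing $D_T$ successes among $m_0$ trials is not enough, because $D_T$ is close to $K$.
\item Fix: restrict to degree-$d$ coefficients. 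Any $f$ in the kernel with bad $T$ is determined by its restriction to $V_T$. It is better to use the first $m_0$ rows only through a sub-batch of $L=C_d n^{d}\cdot(\text{small})$ samples combined with full rank of $M_{m_0}$.
\end{itemize}
I expect this step to be the main obstacle. What I would try first: condition on full rank and apply a union bound of $\binom{n}{\le d\kappa_d n}=e^{O(n\log n)}$ sets $T$ against the bound $\P(\mathcal{W}_0\cap V_T\neq 0)\le e^{-c n^d}$. For the latter I would write
\[
\dim(\mathcal{W}_0\cap V_T)\ \text{ as }\ \dim\ker\bigl(M_{m_0}|_{V_T}\bigr),
\]
and apply the first-moment identity \eqref{eq:introfirstmomentidentity} restricted to $V_T$ over $\mathbb{F}_2$. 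That identity gives a count $2^{D_T-m_0}\le 2^{-cn^d}$ plus bias terms, and the bias terms are controlled exactly as in Proposition \ref{prop:firstmoment}. The mod-2 argument transfers to $\R$ by the minor argument.

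\textbf{Stage 2: the last $s$ samples.} Set $\mathcal{W}_j=\{f\in\mathcal{W}_{j-1}: f(Y_{m_0+j})=0\}$; these are decreasing subspaces of $\mathcal{W}_0$. If $\mathcal{W}_{j-1}\ne0$, choose a nonzero $f_j\in\mathcal{W}_{j-1}$ measurable with respect to the first $m_0+j-1$ samples. By the structural claim, $f_j$ has at least $\kappa_d n$ disjoint top-degree monomials. The Meka--Nguyen--Vu anti-concentration theorem then gives
\[
\P\bigl(f_j(Y_{m_0+j})=0\mid\text{past}\bigr)\le C_d(\log n)^{a_d}/\sqrt n .
\]
Hence $\dim\mathcal{W}_j=\dim\mathcal{W}_{j-1}-1$ except on this event. $M_K$ is singular iff $\mathcal{W}_s\neq0$, which requires at least one failure among the $s$ steps.

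\textbf{Conclusion.} A union bound over the $s=O(\log n)$ steps gives $C_d(\log n)^{a_d+1}/\sqrt n$. Adding the Stage 1 failure probabilities with $A\ge1$, all of which are smaller, yields the theorem.
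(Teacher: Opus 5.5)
Your overall architecture matches the paper's: the near-square first moment, then a uniform structural statement about $\mathcal{W}_0$ obtained from deterministic subspaces and a union bound over small vertex sets, then Meka--Nguyen--Vu anti-concentration with a union bound over the last $s$ samples. Stage 2 and the reduction to $m=K$ are fine. The gap is the structural claim, which you leave open because of a miscount. You conclude that $D_T$ is close to $K$, but
\[
D_T=K(n-1,d-1)+\binom{n-1}{d}-\binom{n-1-|T|}{d}\le K(n-1,d-1)+|T|\binom{n-2}{d-1},\qquad \binom{n-2}{d-1}=\frac{d}{n-1}\binom{n-1}{d}.
\]
So $|T|\le d\kappa_d n$ gives $D_T\le (d^2\kappa_d+o(1))K$. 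Almost every $d$-set avoids a set of $\epsilon n$ vertices, so $\binom{n-1-|T|}{d}$ is most of $K$, not a small correction. Taking $\kappa_d$ small makes $D_T\le 2^{-d-1}m_0$. Then the argument you discarded (sequential exposure, domination by $\mathrm{Bin}(m_0,2^{-d})$, Chernoff) gives $\P(\mathcal{W}_0\cap V_T\neq 0)\le e^{-2^{-d}m_0/8}$. For $d\ge 2$ this beats the $2^{n-1}$ choices of $T$. That is precisely Lemma~\ref{lem:injective} and Proposition~\ref{prop:matching}. For $d=1$, your count $e^{O(n\log n)}$ of sets $T$ would not lose to $e^{-cn}$; you need $\sum_{j\le\epsilon n}\binom{n}{j}\le e^{h(\epsilon)n}$ with $\epsilon$ small.

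The replacement you sketch does not work as written. The identity \eqref{eq:introfirstmomentidentity} counts linear relations among the sample vectors. Projected to the $D_T$ coordinates of $V_T$, there are $m_0>D_T$ such vectors, so relations always exist. The zero polynomial then contributes $2^{m_0-D_T}$, not $2^{D_T-m_0}$, and bounding this expectation cannot detect a nonzero kernel on $V_T$. What you need is a failure-to-span event, whose first moment is the dual count
\[
\E\,\#\{0\neq q\in V_T:\ q(\bm{z}^{(i)})=0\ \text{for all } i\le m_0\}=\sum_{0\neq q\in V_T}\Bigl(\frac{1+\beta(q)}{2}\Bigr)^{m_0}.
\]
This sum can be bounded. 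Terms with $\beta(q)\le\tau_0$ give at most $2^{D_T-m_0}(1+\tau_0)^{m_0}$, and the rest are handled by Lemma~\ref{lem:count} together with $\beta(q)\le 1-2^{1-d}$. The passage to $\R$ works because $V_T$ is spanned by a down-closed family of monomials. So a corrected version is a valid alternative, needing only $m_0-D_T\ge cn^d$. It is, however, a different computation from the one you describe, and the Cauchy--Schwarz step of Proposition~\ref{prop:firstmoment} does not transfer to it verbatim.
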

	
	In the remainder of this article, we prove Theorem \ref{thm:characterevaluation}.  
	
	\section{Near-Square Estimate} \label{sec:nearsquare}
	Let $\bm{z}$ be uniform on $\mathbb{F}_2^{n-1}$ and define 
	\[
	\Phi_d(\bm{z}) := (\bm{z}^S)_{S \subset [n-1], |S| \leq d} \in \mathbb{F}_2^{K(n-1,d)}
	\]
	where $\bm{z}^S := \prod_{i \in S} \bm{z}_i$. For $q \in \mathcal{Q}_{r,d}$ define its 
	\emph{bias} and \emph{weight} by 
	\[ 
	\beta(q) = \E_{\bm{z}} \chi(q(\bm{z}))
	\]
	and
	\[
	\text{wt}(q) = \P(q(\bm{z}) = 1)
	\]
	Thus, 
	\[
	\beta(q) = 1 - 2 \text{wt}(q).  
	\]
	
	\subsection{Low-degree Boolean polynomials}
	\begin{lemma} \label{lem:support}
		For every non-zero $q \in \mathcal{Q}_{n-1,d}$, 
		\[
		\P(q(\bm{z}) = 1) \geq 2^{-d}.  
		\]
		As a consequence, when $d \geq 2$, $q \neq 0$ and $\beta(q) > 0$ then
		\[
		0 < \beta(q) \leq \beta_d := 1 - 2^{1-d} < 1.
		\]
		For $d=1$, no nonzero polynomial has positive bias.  
	\end{lemma}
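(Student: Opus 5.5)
The lower bound $\P(q(\bm{z})=1)\geq 2^{-d}$ is the classical minimum-distance property of the Reed--Muller code $\mathrm{RM}(d,n-1)$: a nonzero codeword has weight at least $2^{n-1-d}$. I will prove it directly by induction on the number of variables $r$, simultaneously for all $d$. The claim to establish is: every nonzero $q\in\mathcal{Q}_{r,d}$ satisfies $\P(q(\bm{z})=1)\geq 2^{-d}$, where $\bm{z}$ is uniform on $\mathbb{F}_2^r$. Once $d\geq r$ the bound $2^{-d}\leq 2^{-r}$ is trivial, since a nonzero function is nonzero at some point. In particular the base case $r=0$ holds, because then $q$ is the constant $1$.

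For the inductive step I pick a variable $z_r$ and decompose $q=z_r\,g(z_1,\dots,z_{r-1})+h(z_1,\dots,z_{r-1})$ with $\deg g\leq d-1$ and $\deg h\leq d$. This decomposition is unique by the multilinear representation in Section \ref{sec:fourieroverF}. Conditioning on $z_r$, the restrictions are $q|_{z_r=0}=h$ and $q|_{z_r=1}=g+h$, both of degree at most $d$. I split into three cases.

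\begin{enumerate}
\item If both $h$ and $g+h$ are nonzero, induction gives $\P(q=1)\geq \tfrac12 2^{-d}+\tfrac12 2^{-d}=2^{-d}$.
\item If $h=0$, then $q=z_r g$ with $g\neq 0$ of degree at most $d-1$. Here $\P(q=1)=\tfrac12\P(g=1)\geq \tfrac12 2^{-(d-1)}=2^{-d}$.
\item If $g+h=0$, then $h=g$ has degree at most $d-1$ and is nonzero, and the same bound follows.
\end{enumerate}

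Since $q\neq 0$, at least one restriction is nonzero, so these cases are exhaustive. The main point needing care is the observation that a vanishing restriction forces the other one to have degree at most $d-1$; the multilinear decomposition supplies exactly this. The case $d=0$ also needs checking: then $q\equiv1$ and the weight is $1=2^{0}$.

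The consequences follow from $\beta(q)=1-2\,\mathrm{wt}(q)$. For nonzero $q$ we get $\beta(q)\leq 1-2\cdot 2^{-d}=1-2^{1-d}=\beta_d$, and $\beta_d<1$ always. When $d\geq2$, $\beta_d>0$, so the interval $(0,\beta_d]$ is nonempty, and positivity of $\beta(q)$ is simply the hypothesis. For $d=1$, a nonzero affine $q=c+\sum_{i\in S}z_i$ either has $S=\emptyset$, in which case $q\equiv1$ and $\beta=-1$, or has $S\neq\emptyset$, in which case $q$ is balanced and $\beta=0$. Hence no nonzero degree-one polynomial has positive bias.
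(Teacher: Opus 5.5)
Your proof is correct, but it takes a different route from the paper's. You induct on the number of variables $r$, simultaneously for all $d$. You split $q=z_r g+h$ into its two restrictions $h$ and $g+h$ and do a three-way case analysis on which restriction vanishes; this is the classical $(u\,|\,u+v)$ recursion for the minimum distance of $\mathrm{RM}(d,r)$.

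The paper instead inducts on the degree alone. It picks a variable $z_i$ occurring in a top-degree monomial and writes $q=z_i a+b$ with $a\neq 0$ and $\deg a\leq d-1$. At every point $z_{\hat i}$ with $a(z_{\hat i})=1$, the two completions $z_i\in\{0,1\}$ give different values of $q$, so exactly one of them gives $q=1$. Hence $\P(q=1)\geq \tfrac12\P(a=1)\geq \tfrac12\, 2^{-(d-1)}=2^{-d}$.

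The paper's argument needs no case split. It never uses the hypothesis at the same degree $d$, because it works only on the region where the discrete derivative $a$ equals $1$. Your argument needs the same-degree, fewer-variables hypothesis in case 1; that is why you must induct on $r$ with base case $r=0$. It needs the degree-drop hypothesis in cases 2 and 3. In exchange, it makes the recursive structure of the Reed--Muller code explicit.

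Both arguments adapt directly to the real-valued Lemma \ref{lem:supportR}: in yours, use the restrictions $h\pm g$ in place of $h$ and $g+h$. Your derivation of the bias bound $\beta(q)\leq\beta_d$ and of the $d=1$ case matches the paper's.
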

	
	\begin{proof}
		We prove this by induction on the degree.  The base case is clear for a nonzero constant.  
		Now choose a variable $z_i$ occurring in the highest-degree monomial and write
		\[
		q(z) = z_i a(z_{\hat{i}}) + b(z_{\hat{i}}),
		\]
		where $a \neq 0$ and $\deg a \leq d-1$.  If we choose an assignment so that $a = 1$, the two values of $q$ obtained by setting $z_i = 0$ and $z_i = 1$ are distinct, so one of them is equal to one.  By induction, $a=1$ on at least a $2^{-(d-1)}$ fraction of the lower-dimensional hypercube.  Thus, $q=1$ on at least a $2^{-d}$ fraction of the full cube.  
		
		We also have
		$\beta(q) = 1 - 2 \text{wt}(q) \leq 1 - 2^{1-d}$.  When $d=1$, every non-constant affine 
		polynomial is balanced, while the only remaining non-zero polynomial is the constant $1$ 
		whose bias is $-1$.  		
	\end{proof}
	
	We now recall a useful theorem from \cite{MR3400278}.  We specialize this theorem to our setting 
	and our notation.  In words, the lemma tells us that among all low-degree polynomials over $\mathbb{F}_2$, few have a substantial preference of outputting $0$ over $1$. 
	
	\begin{lemma}\cite[Theorem 3.3]{MR3400278} \label{lem:count}
		Fix $d \geq 1$.  There is a constant $A_d$ such that for every $0 < \tau \leq 1/2$,
		\[
		|\{0 \neq q \in \mathcal{Q}_{n-1,d} : \beta(q) \geq \tau\}| \leq \tau^{-A_d (n-1)^{d-1}}.  
		\]
	\end{lemma}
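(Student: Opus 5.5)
Since Lemma~\ref{lem:count} is quoted from \cite[Theorem 3.3]{MR3400278}, I only indicate how I would reprove it. The plan is an \emph{encoding argument}: I would show that every nonzero $q\in\mathcal{Q}_{n-1,d}$ with $\beta(q)\ge\tau$ is determined by a short list of polynomials of degree at most $d-1$, and then count these lists. Write $r=n-1$. The number of polynomials of degree $\le d-1$ in $r$ variables is
\[
2^{K(r,d-1)}=2^{O_d(r^{d-1})}.
\]
So if $q$ is determined by $t$ such polynomials, together with $t$ directions $h_i\in\mathbb{F}_2^r$, the count is at most $2^{t\,O_d(r^{d-1})+tr}$. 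To obtain $\tau^{-A_d r^{d-1}}$ we need $t=O_d(\log(1/\tau))$, which is the real content of the lemma. The case $d=1$ is vacuous by Lemma~\ref{lem:support}, since then no nonzero $q$ has positive bias.

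\emph{Step 1 (derivative identity).} For every $x,h$ we have $q(x)=q(x+h)+D_hq(x)$, where $D_hq(x)=q(x+h)+q(x)$ has degree $\le d-1$. Averaging over $h$ gives
\[
(-1)^{q(x)}\,\E_{\bm h}(-1)^{D_{\bm h}q(x)}=\E_{\bm h}(-1)^{q(x+\bm h)}=\beta(q)\ge\tau
\]
for every $x$. Hence the sign of $\E_{\bm h}(-1)^{D_{\bm h}q(x)}$ equals $(-1)^{q(x)}$, and its magnitude is at least $\tau$.

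\emph{Step 2 (sampling).} Choose independent uniform $h_1,\dots,h_t$ and define
\[
F(x)=\mathrm{sign}\sum_i(-1)^{D_{h_i}q(x)}.
\]
By Hoeffding, for each $x$ the predictor $F(x)$ fails with probability $e^{-\Omega(t\tau^2)}$. So some choice of the $h_i$ makes $F$ agree with $(-1)^{q}$ outside a set of density less than $2^{-d-1}$. Any degree-$\le d$ polynomial $q'$ agreeing with $F$ on the same fraction differs from $q$ on density less than $2^{-d}$. Then $q+q'$ is a degree-$\le d$ polynomial vanishing on more than a $1-2^{-d}$ fraction, so $q=q'$ by Lemma~\ref{lem:support}. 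Thus $(h_i,D_{h_i}q)_{i\le t}$ determines $q$. Since there are at most $2^{O_d(t r^{d-1})}$ such tuples, the number of nonzero $q$ with $\beta(q)\ge\tau$ is at most $2^{O_d(r^{d-1})\,t}$ with $t=O_d(\tau^{-2})$.

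\emph{Main obstacle.} Step 2 gives only $t\asymp\tau^{-2}$, i.e.\ the weaker bound $2^{O(\tau^{-2}r^{d-1})}$. This already suffices for any fixed $\tau$, but it is not the stated $\tau^{-A_dr^{d-1}}$, which needs $t=O(\log(1/\tau))$. To improve it I would first try to exploit bias amplification rather than majority voting. By Cauchy--Schwarz, $\E_{\bm h}\beta(D_{\bm h}q)\ge\beta(q)^2$. Iterating over $k$ derivatives, $\E\,\beta(D_{h_1}\cdots D_{h_k}q)\ge\tau^{2^k}$, and the $d$-th derivative is a constant, which forces $\ge\tau^{2^d}$ of the $d$-tuples to have vanishing $d$-th derivative. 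So $q$ has a highly structured ``rank'' (for $d=2$: a quadratic form of rank $O(\log 1/\tau)$). Concretely, I would try to show that $q$ agrees, on density $>1-2^{-d}$, with a function of $O_d(\log(1/\tau))$ polynomials of degree $\le d-1$. This mirrors the counting of \cite{MR3400278}, where the weight-distribution bound of Kaufman--Lovett--Porat type is proved by an iterated restriction/derivative scheme in which each round costs $O(\log(1/\tau))$ lower-degree polynomials rather than $\tau^{-2}$. The uniqueness step would then be reused verbatim, and the count becomes $2^{O_d(\log(1/\tau))\,r^{d-1}}=\tau^{-A_dr^{d-1}}$.
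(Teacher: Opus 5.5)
\textbf{Where the gap is.} The paper does not reprove this lemma. It disposes of $d=1$ via Lemma~\ref{lem:support}, rewrites $\beta(q)\ge\tau$ as $\mathrm{wt}(q)\le(1-\tau)/2$, and quotes \cite[Theorem 3.3]{MR3400278} with $\ell=1$, $\varepsilon=\tau$. Your self-contained attempt has a genuine gap, which you partly acknowledge. Steps 1 and 2 are correct, but they prove only $2^{O_d(\tau^{-2})(n-1)^{d-1}}$, not $\tau^{-A_d(n-1)^{d-1}}$.

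This loss is not cosmetic. In Case~II of the proof of Proposition~\ref{prop:firstmoment}, the lemma is used with $\tau_j$ as small as $1/m\ge 1/K$, where $K=\Theta_d(n^d)$. There your bound is $2^{O_d(n^{3d-1})}$, which the factor $(1-2^{-d})^K=e^{-\Theta_d(n^d)}$ cannot absorb; the stated bound is only $e^{O_d(n^{d-1}\log n)}$. So ``suffices for any fixed $\tau$'' is not enough.

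The repair you sketch does not close the gap. Gowers-norm amplification gives usable structure essentially only for $d=2$, where the alternating form has rank $O(\log(1/\tau))$. For $d\ge 3$ you would need a quantitative bias-implies-low-rank theorem with the right dependence on $\tau$, plus a count of the outer function, and the proposal supplies neither. You name the right target, an encoding by $O_d(\log(1/\tau))$ directions and lower-degree polynomials, but not a mechanism that reaches it.

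\textbf{The missing idea.} The fix is small and keeps your encoding: take the votes along a random \emph{subspace} rather than independent directions, and use pairwise instead of full independence.
\begin{itemize}
\item Let $h_1,\dots,h_k$ be i.i.d.\ uniform in $\mathbb{F}_2^{n-1}$ with $k=\lceil 2\log_2(1/\tau)\rceil+d+2$. For $0\ne a\in\mathbb{F}_2^k$ put $v_a=\sum_i a_ih_i$.
\item For $a\ne a'$, the pair $(v_a,v_{a'})$ is uniform. So for fixed $x$ the signs $(-1)^{q(x+v_a)}$, $a\ne 0$, are pairwise independent with mean $\beta(q)\ge\tau$.
\item By Chebyshev, the majority of the $2^k-1$ votes $D_{v_a}q(x)=q(x)+q(x+v_a)$ equals $q(x)$ except with probability at most $1/((2^k-1)\tau^2)<2^{-d-1}$. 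Averaging over $x$, some choice of the $h_i$ makes the majority predictor wrong only on density $<2^{-d-1}$.
\end{itemize}
These exponentially many votes cost only $k$ directions. Telescoping along the support $i_1<\dots<i_s$ of $a$ gives $D_{v_a}q(x)=\sum_{j=1}^{s}D_{h_{i_j}}q(x+h_{i_1}+\dots+h_{i_{j-1}})$, so every vote is computable from $(h_i,D_{h_i}q)_{i\le k}$. Your uniqueness step via Lemma~\ref{lem:support} then shows that this data determines $q$. Hence the number of nonzero $q$ with $\beta(q)\ge\tau$ is at most $\bigl(2^{n-1}|\mathcal{Q}_{n-1,d-1}|\bigr)^k=2^{k\,O_d((n-1)^{d-1})}$ for $d\ge 2$. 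Since $k\le (d+5)\log_2(1/\tau)$ when $\tau\le 1/2$, this is $\tau^{-A_d(n-1)^{d-1}}$.

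Replacing full independence (Hoeffding) by pairwise independence over a subspace (Chebyshev) is exactly what turns $\tau^{-2}$ into $\log(1/\tau)$. It is the standard device behind Kaufman--Lovett--Porat-type weight-distribution bounds such as the one the paper cites.
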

	\begin{proof}
		For $d =1$, the set is empty by the previous lemma. Now, assume $d \geq 2$.  The condition that
		$\beta(q) \geq \tau$ is equivalent to 
		\[
		\text{wt}(q) \leq \frac{1-\tau}{2}. 
		\] 
		For fixed $d$ and all sufficiently large $n$, Theorem 3.3 of \cite{MR3400278} applied with its parameter $\ell = 1$ and $\varepsilon = \tau$ gives
		\[
		|\{q: \text{wt}(q) \leq (1 - \tau) 2^{-1} \}| \leq (1/\tau)^{8c \sum_{j=0}^{d-1} \binom{n-2}{j}}
		\leq \tau^{-A_d (n-1)^{d-1}}.  
		\]
		Removing the zero polynomial only decreases the count.
	\end{proof}
	
	\subsection{First Moment Count}
	The following proposition tells us that the Fourier-evaluation matrix is full rank even for $m$ quite close to the dimension of the polynomial space.
	
	This initial estimate strengthens the column-independence theorem of Abbe, Shpilka, and Wigderson in the fixed-degree regime \cite[Theorem 4.5]{MR3400278}. As with their tensor theorem, their largest dimension is about $n^{d-1}\log n$ short of the optimal dimension.  Because of this, their estimate is too weak for us to union bound over the remaining $n^{d-1} \log n$ missing columns (see Section \ref{sec:lastexposure}.  Their result follows from sequential exposure and a uniform bound on common zero sets, obtained through generalized Hamming weights. We instead count all binary dependence relations by a first-moment argument and express their expected number in terms of polynomial biases. Applying their weight-distribution estimate to this expression yields a bound governed directly by the dimension deficit $K-m$.  This will allow us to use an initial exposure that stops only $O(\log n)$ columns below the dimension threshold. The subsequent anti-concentration argument then handles the remaining columns.

	\begin{proof}[Proof of Proposition \ref{prop:firstmoment}]
		Let $\bm{v} = \Phi_d(\bm{z})$ and let $\bm{v}^{(1)}, \dots, \bm{v}^{(m)}$ be independent copies
		of $\bm{v}$.  Let $N_{dep}$ count the non-empty subsets $I \subset [m]$ such that
		\[
		\sum_{i \in I} \bm{v}_i = 0 \text{ in } \mathbb{F}_2^{K}.
		\]
		If the columns are dependent, then $N_{dep} \geq 1$.  By Markov's inequality,
		\[
		\P(\rank_{\mathbb{F}_2} [\bm{v}^{(1)} \ldots \bm{v}^{(m)}] < m) \leq \E N_{dep}.
		\]
		For $t \geq 1$, let
		\[
		p_t := \P(\bm{v}^{(1)} + \dots + \bm{v}^{(t)} = 0).  
		\]
		We identify $q \in \mathcal{Q}_{r,d}$ with its coefficient vector 
		\[
		\gamma(q) = (c_S(q))_{|S| \leq d} \in \widehat{\mathbb{F}_2^K}.
		\]
		To be explicit about the normalization, let $\phi(\bm{v}) = 2^K \P(\bm{v} = v)$
		be the probability mass function of $\bm{v}$ relative to the uniform measure on $\mathbb{F}_2^K$.  The Fourier transform gives
		\[
		\widehat{\phi}(\gamma (q))= 2^{-K} \sum_{ v \in \mathbb{F}_2^K} \phi(v) \chi_{\gamma(q)}(v) = \beta(q).
		\]
		Orthogonality of characters and independence now give
		
		\[
		p_t = 2^{-K} \sum_{q \in \mathcal{Q}_{r,d}} \beta(q)^t.
		\]
		Summing over the possible support sizes of a dependency relation,
		\begin{align} \label{eq:firstmoment}
			\E N_{dep} &= \sum_{t=1}^m \binom{m}{t} P_t  \nonumber \\
			&= 2^{-K} \sum_{q \in \mathcal{Q}_{r,d}} ((1 + \beta(q))^m -1).
		\end{align}
		The zero polynomial has bias 1 and contributes at most 
		\[
		2^{-K} (2^m - 1) \leq 2^{m-K} = 2^{-s}.  
		\]
		If $q \neq 0$ and $\beta(q) \leq 0$ then $(1 + \beta(q))^m - 1 \leq 0$ so we can discard
		these terms for the upper bound. We control the sum in \eqref{eq:firstmoment} by dividing into 
		two cases.
		
		\noindent 
		\emph{Case I: Small, positive bias.}
		For $0 \leq u \leq 1/m$,
		\[
		(1+u)^m - 1 \leq emu.
		\]
		Therefore, the contribution of $0 < \beta(q) \leq 1/m$ is at most
		\[
		em 2^{-K} \sum_q \beta(q).  
		\]
		By the Cauchy-Schwarz inequality,
		\[
		2^{-K} \sum_q \beta(q) \leq \left(2^{-K} \sum_q \beta(q)^2 \right)^{1/2}.  
		\]
		If $\bm{z}'$ is an independent copy of $\bm{z}$, the orthogonality of the characters gives
		\[
		2^{-K} \sum_q \beta(q)^2 = \P(\Phi_d(\bm{z}) = \Phi_d(\bm{z}')) = \P(\bm{z}= \bm{z}') = 2^{-(n-1)}.  
		\]
		The second equality used the degree-one
		coordinates of $\Phi_d$.  Thus, the contribution for the polynomials with small, but positive bias 
		is at most 
		\[
		em 2^{-(n-1)/2}.
		\]
		
		\noindent 
		\emph{Case II: Large, positive bias.}
		It remains to address non-zero $q$ with $\beta(q) > 1/m$.  For $d=1$, there are none, so assume
		that $d \geq 2$. and recall that
		\[
		\beta(q) \leq \beta_d = 1 - 2^{1-d}
		\]
		by Lemma \ref{lem:support}.  We divide into dyadic intervals.
		Let
		\[
		\tau_j = \frac{2^j}{m}
		\]
		for those $j$ with $\tau_j \leq 1/2$.  By Lemma \ref{lem:count}, the number of polynomials
		with $\tau_j < \beta(q) \leq 2 \tau_j$ is at most $\tau_j^{-A_d (n-1)^{d-1}}$.  For those polynomials 
		with $\beta(q) > 1/2$, we have the same bound with $\tau = 1/2$.  Therefore, the contribution 
		from these polynomials with large bias is at most
		\begin{align*}
			2^{-K} \left( \sum_j \tau_j^{-A_d (n-1)^{d-1}} (1 + \min\{2 \tau_j, \beta_d\})^m \right) + 2^{-K} 2^{A_d (n-1)^{d-1}} (1 + \beta_d)^m.	
		\end{align*}
		Since $m \leq K$ and every exponent base is at most $1 + \beta_d$, 
		\[
		2^{-K} (1 + \min\{2 \tau_j, \beta_d\})^m \leq \left( \frac{1 + \beta_d}{2} \right)^K = (1 - 2^{-d})^K.
		\]
		Also, $\tau_j \geq 1/m \geq 1/K$ so
		\[
		\tau_j^{-A_d (n-1)^{d-1}} \leq K^{A_d (n-1)^{d-1}} = \exp(O_d((n-1)^{d-1} \log n)).
		\]
		The number of dyadic classes is $O(\log K)$ and $K = \Theta_d((n-1)^d)$.  Thus, the entire 
		contribution from the polynomials with large bias is $e^{-c_d (n-1)^d}$ for some $c_d >0$.  
		Combining the contributions completes the proof.  
		
		\noindent 
		\emph{Transfer to real rank.}
		It remains to prove the rank statement over $\R$.  Regard $z_i$ as an integer
		in $\{0,1\}$.  Set 
		\[
		y_i = 1 - 2 z_i.
		\]
		Then, with the sum taken over all subsets including $T=S$,
		\[
		\chi_S(y) = \prod_{i \in S} ( 1- 2z_i) = \sum_{T \subset S} (-2)^{|T|} z^T.
		\]
		This is an invertible real change of basis: in an inclusion-compatible ordering it is trinagular with diagonal entries $(-2)^{|S|}$.  If the monomial matrix has full column rank over $\mathbb{F}_2$, some $m \times m$ minor has odd integer determinant.  
		That determinant is also nonzero over $\R$ so the monomial matrix has full real rank so we get full row rank for $M_m$.  
	\end{proof}
	
	For convenience, we let
	\[
	\Delta_{n,d} = C_d K(n-1, d)2^{-(n-1)/2} + e^{-c_d (n-1)^d}.
	\]
	Thus, for every integer $1 \leq s < K$,
	\begin{equation} \label{eq:rankbound}
		\P(\rank M_{K-s} < K-s) \leq 2^{-s} + \Delta_{n,d}.  
	\end{equation}
	We will apply this estimate with $s$ of order $\log n$.  
	
	\section{Fixed low-dimensional space is killed by many samples} \label{sec:lowdim}
	We work over $\R$ on $\{\pm 1\}^{n-1}$.  
	\begin{lemma} \label{lem:supportR}
		If $f \in \mathcal{P}_{n-1,d}$ is non-zero then for $\bm{y}$ uniformly 
		distributed on $\{\pm 1\}^{n-1}$
		\[
		\P(f(Y) \neq 0) \geq 2^{-d}.
		\]
	\end{lemma}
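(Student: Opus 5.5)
This is the real-valued analogue of the Schwartz--Zippel type bound in Lemma \ref{lem:support}, and I would prove it the same way, by induction on the Fourier degree of $f$. Since the uniform measure on $\{\pm 1\}^{n-1}$ is a product measure, conditioning on all coordinates but one leaves a uniform sign in the remaining coordinate. Each induction step peels off one variable and loses a factor of $2$.

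\emph{Base case.} If $\deg f = 0$, then $f$ is a nonzero constant. Hence $\P(f(\bm{y})\neq 0)=1\geq 2^{0}$.

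\emph{Inductive step.} Suppose $\deg f = k\geq 1$, with $k\leq d$.
\begin{enumerate}[label=(\roman*)]
\item Choose a set $S$ with $|S|=k$ and $\hat f(S)\neq 0$, and a coordinate $i\in S$.
\item Since $y_i^2=1$ on the cube, every monomial is multilinear, so we may write
\[
f(y)=y_i\, g(y_{\hat i})+h(y_{\hat i}),
\]
where $g=\sum_{T\ni i}\hat f(T)\chi_{T\setminus\{i\}}$ and $h=\sum_{T\not\ni i}\hat f(T)\chi_T$ depend only on the other coordinates.
\item The polynomial $g$ is nonzero, because its coefficient on $\chi_{S\setminus\{i\}}$ is $\hat f(S)\neq 0$. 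Also $\deg g\leq k-1$.
\item By the induction hypothesis applied on the $(n-2)$-dimensional cube, $\P(g(\bm{y}_{\hat i})\neq 0)\geq 2^{-(k-1)}$.
\item Fix any assignment of $y_{\hat i}$ with $g(y_{\hat i})\neq 0$. The two values $h+g$ and $h-g$ obtained at $y_i=\pm1$ differ by $2g\neq 0$, so at least one of them is nonzero.
\item Conditioned on $y_{\hat i}$, the sign $\bm{y}_i$ is uniform and independent. So on this event, $f(\bm{y})\neq 0$ with conditional probability at least $1/2$.
\end{enumerate}
Averaging over $y_{\hat i}$ gives
\[
\P(f(\bm{y})\neq 0)\geq \tfrac12\cdot 2^{-(k-1)}=2^{-k}\geq 2^{-d}.
\]

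There is no real obstacle here. The only point needing care is that the decomposition in step (ii) is legitimate over $\R$: it uses the multilinear (Fourier) representation from Section \ref{sec:fourieroverR}, which is unique. That uniqueness is what guarantees $g\neq 0$ in step (iii), and that $g$ and $h$ are independent of $y_i$.
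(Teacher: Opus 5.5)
Your proof is correct and follows the paper's approach. The paper omits the proof as identical to the induction in Lemma~\ref{lem:support}: peel off a variable from a top-degree monomial, write $f = y_i g + h$ with $g \neq 0$ of lower degree, and note that wherever $g \neq 0$ the two values at $y_i = \pm 1$ differ, so one is nonzero; you simply spell out the real-valued details (uniqueness of the multilinear representation and the difference $2g$).
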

	\begin{proof}
		We omit this proof as it is identical to the proof of Lemma \ref{lem:support}.
	\end{proof}
	The next lemma is technical and will be used in the proof of Lemma \ref{lem:injective}.
	\begin{lemma} \label{lem:domination}
		Let $\bm{X}_1, \dots, \bm{X}_m \in \{0,1\}$ be adapted to a filtration $(\mathcal{F}_t)_{t = 0}^m$.  Let $L \geq 1$ be an integer and $ p \in [0,1]$.  Suppose that, whenever $\sum_{j \leq t} \bm{X}_j < L$, 
		\[
		\P(X_{t+1} = 1| \mathcal{F}_t) \geq p,
		\]
		then
		\[
		\P\left(\sum_{j=1}^m X_j < L \right) \leq \P(\text{Bin}(m,p) < L).  
		\]
	\end{lemma}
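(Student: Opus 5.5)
The plan is a coupling: construct i.i.d.\ Bernoulli$(p)$ variables $\bm{\xi}_1,\dots,\bm{\xi}_m$ on an enlarged probability space such that $\bm{X}_t \geq \bm{\xi}_t$ for every $t$ up to the first time the running sum of the $\bm{X}_j$ reaches $L$. Granted this, on the event $\{\sum_{j\le m}\bm{X}_j<L\}$ the threshold is never reached, so $\bm{X}_t\ge\bm{\xi}_t$ for all $t\le m$, hence $\sum_j \bm{\xi}_j\le\sum_j\bm{X}_j<L$. Therefore
\[
\P\Big(\sum_{j=1}^m \bm{X}_j<L\Big)\le \P\Big(\sum_{j=1}^m\bm{\xi}_j<L\Big)=\P(\mathrm{Bin}(m,p)<L).
\]

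To build the coupling, enlarge the space with independent uniform variables $\bm{U}_1,\dots,\bm{U}_m$ on $[0,1]$, independent of everything else, and work with the filtration $\mathcal{G}_t=\mathcal{F}_t\vee\sigma(\bm{U}_1,\dots,\bm{U}_t)$. For the original $\bm{X}_{t+1}$, set $\pi_t=\P(\bm{X}_{t+1}=1\mid\mathcal{F}_t)$ and define
\[
\bm{\xi}_{t+1}=
\begin{cases}
\bm{X}_{t+1}\,\mathbf{1}\{\bm{U}_{t+1}\le p/\pi_t\}, & \text{if } \sum_{j\le t}\bm{X}_j<L \text{ (so } \pi_t\ge p\text{)},\\
\mathbf{1}\{\bm{U}_{t+1}\le p\}, & \text{otherwise,}
\end{cases}
\]
with the convention $p/\pi_t=1$ when $\pi_t=0$ (only possible if $p=0$). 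In the first case $\bm{\xi}_{t+1}\le\bm{X}_{t+1}$, and
\[
\P(\bm{\xi}_{t+1}=1\mid\mathcal{G}_t)=\pi_t\cdot\frac{p}{\pi_t}=p.
\]
This uses that $\bm{U}_{t+1}$ is independent of $\bm{X}_{t+1}$ and $\mathcal{G}_t$, and that conditioning on the extra $\bm{U}$'s does not change $\pi_t$ because they are independent of $\mathcal{F}_\infty$. In the second case the conditional probability is again exactly $p$. So each $\bm{\xi}_{t+1}$ is Bernoulli$(p)$ conditionally on $\mathcal{G}_t$, and $\bm{\xi}_1,\dots,\bm{\xi}_t$ are $\mathcal{G}_t$-measurable. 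By the chain rule for conditional probabilities, the $\bm{\xi}_t$ are i.i.d.\ Bernoulli$(p)$.

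The only delicate step is this measure-theoretic bookkeeping. One must check that $\pi_t$ is unaffected by enlarging the filtration, and use that the stopping condition $\sum_{j\le t}\bm{X}_j<L$ is $\mathcal{F}_t$-measurable so the case split is legitimate. Everything else is elementary.

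An alternative that avoids couplings is induction on $m$, conditioning on $\mathcal{F}_1$ and using that $\P(\mathrm{Bin}(k,p)<\ell)$ is monotone decreasing in $p$ and satisfies the one-step recursion. In that route the filtration must be shifted, so the induction hypothesis has to be stated for arbitrary filtrations with general remaining thresholds $\ell\le L$. I would present the coupling version.
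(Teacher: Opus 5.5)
Your proposal is correct and is the same coupling argument as the paper's: before the threshold is reached, thin $\bm{X}_{t+1}$ by an independent uniform at rate $p/\pi_t$, use fresh Bernoulli$(p)$ variables afterwards, and observe that on $\{\sum_j \bm{X}_j<L\}$ the first case applies at every step. Your filtration $\mathcal{G}_t=\mathcal{F}_t\vee\sigma(\bm{U}_1,\dots,\bm{U}_t)$ is the right choice; the paper's $\mathcal{F}_t\vee\sigma(\bm{u}_1,\dots,\bm{u}_m)$ already contains $\bm{u}_{t+1}$, and then the conditional probability of $\{\bm{\xi}_{t+1}=1\}$ would not equal $p$.
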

	
	\begin{proof}
		This is a standard stochastic domination argument.  The case $p=0$ is immediate so assume $p > 0$.  Set $\bm{S}_t =  \sum_{j=1}^t \bm{X}_j$ and $\bm{a}_t = \P(\bm{X}_t = 1| \mathcal{F}_{t-1})$.  We enlarge the probability space by uniform random variables $\bm{u}_1, \ldots, \bm{u}_m \sim U([0,1])$ that are independent of each other and the original probability space.  Define
		\[
		\bm{B}_t = \begin{cases}
			\bm{X}_t \indicator{\bm{u}_t \leq p/\bm{a}_t} & \bm{S}_{t-1} < L \\
			\indicator{\bm{u}_t \leq p} & \bm{S}_{t-1} \geq L.
		\end{cases}
		\] 
		The first case is well-defined because $\bm{a}_t \geq p$.  For $\mathcal{G}_t = \mathcal{F}_t \vee \sigma(\bm{u}_1, \ldots, \bm{u}_m)$, we have $\P(\bm{B}_t = 1| \mathcal{G}_{t-1}) = p$.  As a consequence, the $\bm{B}_t$ are independent BErnoulli random variables of parameter $p$.  On the event $\{\bm{S}_m < L\}$, we have
		$\bm{S}_{t-1} < L$ for every $t$, so $\bm{B}_t \leq \bm{X}_t$ throughout.  Therefore,
		\[
		\{\bm{S}_m  < L\} \subset \left\{\sum_{t=1}^m \bm{B}_t < L\right\},
		\]  
		which concludes the proof.
	\end{proof}
	The following lemma indicates that with enough random samples, the evaluation map is injective.  In other words,
	the only polynomial in the subspace that can vanish on all the sample points is the zero polynomial.
	
	\begin{lemma} \label{lem:injective}
		Fix $d \geq 1$ and set $p_d = 2^{-d}$.  Let $\mathcal{U} \subset \mathcal{P}_{n-1,d}$ be a fixed subspace of dimension $L$.  If $\bm{y}^{(1)}, \dots, \bm{y}^{(m)}$ are uniform, independent samples from $\{\pm 1\}^{n-1}$ and 
		\[
		L \leq \frac{p_d m}{2},
		\]
		then
		\[
		\P(\mathsf{Ev}_m|_\mathcal{U} \text{ is not injective}) \leq \exp(-p_d m/8).  
		\]
	\end{lemma}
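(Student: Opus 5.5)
We expose the samples one at a time and track the kernel of the evaluation map restricted to $\mathcal{U}$. Let $\mathcal{F}_t=\sigma(\bm{y}^{(1)},\dots,\bm{y}^{(t)})$ and
\[
\mathcal{K}_t=\{f\in\mathcal{U}: f(\bm{y}^{(i)})=0 \text{ for } i\leq t\},
\]
so that $\mathcal{K}_0=\mathcal{U}$, the spaces $\mathcal{K}_t$ are nested and decreasing, and each is $\mathcal{F}_t$-measurable. Since membership in $\mathcal{K}_{t+1}$ imposes a single additional linear condition on $\mathcal{K}_t$, we have $\dim\mathcal{K}_{t+1}\in\{\dim\mathcal{K}_t,\dim\mathcal{K}_t-1\}$.

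Define $\bm{X}_{t+1}=\indicator{\dim\mathcal{K}_{t+1}<\dim\mathcal{K}_t}$. Then $\dim\mathcal{K}_t=L-\sum_{j\le t}\bm{X}_j$, and $\mathsf{Ev}_m|_{\mathcal{U}}$ is injective exactly when $\sum_{j\le m}\bm{X}_j=L$. The dimension can only drop while the kernel is nonzero, so non-injectivity is the event $\sum_{j\leq m}\bm{X}_j<L$.

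The key step is the conditional lower bound. Suppose $\sum_{j\le t}\bm{X}_j<L$, so that $\mathcal{K}_t\neq 0$. Choose, measurably in $\mathcal{F}_t$, a nonzero $f\in\mathcal{K}_t$; for instance, take the first element of a basis obtained by a deterministic rule. If $f(\bm{y}^{(t+1)})\neq 0$, then $f\notin\mathcal{K}_{t+1}$ and the dimension drops. Because $\bm{y}^{(t+1)}$ is independent of $\mathcal{F}_t$, Lemma~\ref{lem:supportR} gives
\[
\P(\bm{X}_{t+1}=1\mid\mathcal{F}_t)\geq \P(f(\bm{y}^{(t+1)})\neq0\mid\mathcal{F}_t)\ge 2^{-d}=p_d.
\]
Lemma~\ref{lem:domination} with $p=p_d$ then yields
\[
\P(\mathsf{Ev}_m|_{\mathcal{U}}\text{ not injective})\le \P(\mathrm{Bin}(m,p_d)<L).
\]

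It remains to bound the binomial tail. Since $L\le p_dm/2$, this event lies in $\{\mathrm{Bin}(m,p_d)<\mu/2\}$ with $\mu=p_dm$. The multiplicative Chernoff bound $\P(\mathrm{Bin}<(1-\delta)\mu)\le e^{-\delta^2\mu/2}$ with $\delta=1/2$ gives exactly $\exp(-p_dm/8)$.

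The main point to be careful about is the measurable choice of $f$ and the fact that the conditional probability bound must hold uniformly over $\mathcal{F}_t$. Both are handled by the independence of the fresh sample, together with the fact that Lemma~\ref{lem:supportR} holds for every fixed nonzero $f$.
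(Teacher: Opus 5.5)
Your proposal is correct and follows the paper's proof essentially step for step. The paper tracks the rank $\bm{R}_t$ of the evaluation functionals on $\mathcal{U}$ rather than your kernel dimension $L-\bm{R}_t$; otherwise it uses the same ingredients: a measurable choice of a nonzero kernel element, Lemma~\ref{lem:supportR}, Lemma~\ref{lem:domination}, and the Chernoff bound with $\delta=1/2$.
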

	
	\begin{proof}
		For $L=0$ the conclusion is immediate, so assume that $\dim L > 0$.  Let $\bm{R}_t$ be the rank  of the first $t$ evaluation functionals restricted to $\mathcal{U}$.  To be more explicit, once the sample points $\bm{y}^{(1)}, \ldots, \bm{y}^{(t)}$ are fixed, each defines a linear functional, $\ell_i$, from $\mathcal{U}$ to $\R$ by $\ell_i(f) = f(\bm{y}^{(i)})$.  Thus, 
		\[
		\bm{R}_t = \dim (\text{span}\{\ell_1, \ldots \ell_t\}).
		\]
		If $\bm{R}_t < L$, choose a non-zero $f_t$ in their common kernel.  This choice can be made measurably with respect to $\mathcal{F}_t = \sigma(\bm{y}^{(1)}, \ldots, \bm{y}^{(t)})$, for example by taking the first vector in a row-reduced basis of the kernel.  Conditioning upon the first $t$ samples, $f_t$ is fixed and $\bm{y}^{(t+1)}$ is still uniform.  By Lemma \ref{lem:supportR},
		\[
		\P(f_t(\bm{y}^{(t+1)})  \neq 0| \mathcal{F}_t) \geq p_d.  
		\]
		On this event, the new evaluation functional is not in the span of the preceding ones so $\bm{R}_{t+1} = \bm{R}_t + 1$.  Lemma \ref{lem:domination} gives
		\[
		\P(\bm{R}_m < L) \leq \P\big(\text{Bin}(m,p_d)< L\big) \leq \P\big(\text{Bin}(m,p_d) < p_d m/2\big).
		\]
		Chernoff's bound can be used to control this last probability by $e^{-p_d m/8}$.  
	\end{proof}

	\section{Structure of the kernel} \label{sec:kernel}
	In this section, we study the subspace of low-degree polynomials that
	vanish at all the $K(n-1,d) - s$ sample points seen so far.  We show that every polynomial
	in this subspace has many degree $d$ monomials involving mutually disjoint variables.

	Let $f \in \mathcal{P}_{n-1, d}$ and write its degree-$d$ part as
	\[
	f^{=d}(y) = \sum_{|S| = d} \hat{f}(S) \chi_S(y).
	\]
	We introduce the associated $d$-uniform hypergraph as
	\[
	\mathcal{H}_d(f) := \{S \subset [n-1]: |S| = d, \hat{f}(S) \neq 0\}
	\]
	be the hypergraph representing the degree-$d$ part of $f$.  
	Let $\nu_d(f)$ be the matching number of $\mathcal{H}_d(f)$, meaning the maximum number of 
	pairwise disjoint members of $\mathcal{H}_d(f)$.	This notion is closely related to the \emph{rank} of a polynomial defined in \cite{MR3542863} which will play a role in the next section.  If $\deg f < d$, then $\nu_d(f) = 0$.  
	
	\begin{proposition} \label{prop:matching}
		Fix $d \geq 1$ and set $K = K(n-1, d)$.  Let $m_0$ be any integer such that
		$K/2 \leq m_0 \leq K-1$.  For uniformly distributed, independent $\bm{y}^{(1)}, \dots, \bm{y}^{(m_0)} \in \{\pm 1\}^{n-1}$. 
		We denote
		\[
		\mathcal{W}_0 = \ker \mathsf{Ev}_{m_0}
		\]
		Then there exist
		constants $\kappa_d, c'_d > 0$ such that
		\[
		\P(\exists \text{ non-zero } f \in \mathcal{W}_0: \nu_d(f) < \kappa_d (n-1)) \leq e^{-c'_d (n-1)^d}
		\]
		for all sufficiently large $n$.  
	\end{proposition}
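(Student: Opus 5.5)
The plan is to follow the strategy outlined in the introduction: replace the random kernel by a union of deterministic low-dimensional subspaces, and kill each one with Lemma~\ref{lem:injective}. Take a maximal matching $\{S_1,\dots,S_\nu\}$ in $\mathcal{H}_d(f)$ and let $T=S_1\cup\dots\cup S_\nu$. Then $|T|\le d\nu$. By maximality, every $S\in\mathcal{H}_d(f)$ meets $T$. Consequently, if $\nu_d(f)<\kappa_d(n-1)$, there is a set $T\subset[n-1]$ with $|T|= t:=\lfloor d\kappa_d (n-1)\rfloor$ (padding $T$ if needed) such that $f$ lies in the deterministic subspace
\[
\mathcal{U}_T=\operatorname{span}\{\chi_S: |S|\le d-1\}\oplus\operatorname{span}\{\chi_S:|S|=d,\ S\cap T\ne\emptyset\}.
\]
Hence the bad event is contained in $\bigcup_{|T|=t}\{\mathsf{Ev}_{m_0}|_{\mathcal{U}_T}\text{ is not injective}\}$, since a nonzero $f\in\mathcal{W}_0\cap\mathcal{U}_T$ witnesses non-injectivity.

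Next we bound $\dim\mathcal{U}_T$. Write $r=n-1$. The number of $d$-sets meeting $T$ is at most $t\binom{r-1}{d-1}\le d\kappa_d r\cdot r^{d-1}/(d-1)!$, and the lower-degree part has dimension $K(r,d-1)=O_d(r^{d-1})$. Thus
\[
L_T:=\dim\mathcal{U}_T\le \frac{d^2\kappa_d}{d!}\,r^d+O_d(r^{d-1}).
\]
Since $m_0\ge K/2\ge r^d/(2\cdot d!)$ (using $K\ge\binom rd$), we can choose $\kappa_d$ small enough, say $\kappa_d=2^{-d-4}/d^2$, that $L_T\le p_d m_0/2$ holds for all large $n$. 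Lemma~\ref{lem:injective} then applies to each fixed $\mathcal{U}_T$ and gives failure probability at most $\exp(-2^{-d}m_0/8)\le\exp(-c\,r^d)$ with $c=2^{-d-4}/d!$.

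The step that decides whether this works is the union bound over $T$. The number of sets $T$ is $\binom{r}{t}\le \exp(t\log(er/t))=\exp(O_d(\kappa_d\log(1/\kappa_d)\, r))$. This is only exponential in $r$, while the failure bound for each $T$ is $\exp(-c r^d)$. For $d\ge2$ the entropy term is therefore negligible, and we obtain $e^{-c'_d r^d}$ for any $c'_d<c$ once $n$ is large.

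The case $d=1$ needs separate care, because there both quantities are linear in $r$. If $\kappa_1<1$ is small, it would be cleaner to bypass it entirely. For $d=1$, $\nu_1(f)$ is the number of nonzero linear coefficients. A nonzero $f\in\mathcal{W}_0$ with $\nu_1(f)<\kappa_1 r$ lies in some $\mathcal{U}_T$ with $|T|=\kappa_1 r$ and $\dim\mathcal{U}_T=1+\kappa_1 r$. Here $\binom{r}{\kappa_1 r}\le e^{\kappa_1\log(e/\kappa_1)r}$ must beat $e^{-m_0/16}\le e^{-r/32}$. Choosing $\kappa_1$ with $\kappa_1\log(e/\kappa_1)<1/64$ works.

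In general, then, I would fix $\kappa_d$ small enough to satisfy both the dimension condition $L_T\le p_dm_0/2$ and the entropy condition $\binom{r}{t}e^{-p_dm_0/8}\le e^{-c'_dr^d}$. This choice of constants is the only real obstacle. Measurability of the events poses no issue, because each $\mathcal{U}_T$ is fixed before sampling.
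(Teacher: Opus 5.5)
Your proposal is correct and follows the paper's proof essentially step for step. The common steps are the maximal-matching reduction to the deterministic spaces $\mathcal{U}_J$, the bound $\dim\mathcal{U}_J\le K(r,d-1)+|J|\binom{r-1}{d-1}$, Lemma~\ref{lem:injective} applied to each fixed space, and a union bound that is trivially absorbed for $d\ge2$ and needs an entropy estimate for $d=1$. Padding $T$ to a fixed size is only a cosmetic variant. One harmless slip: $K\ge\binom{r}{d}$ does not give $K\ge r^d/d!$, and in fact $K(r,d)<r^d/d!$ for large $r$ when $d\ge4$. Using $\binom{r}{d}\ge r^d/(2\,d!)$ for large $r$ suffices, and your $\kappa_d$ has enough slack; only the constant $c$ changes by a factor of $2$.
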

	
	\begin{proof}
		Let $p_d = 2^{-d}$.  Let $\alpha_d>0$ be a sufficiently small constant whose value we will fix below.  Define
		\[
		R = \left\lfloor \frac{\alpha_d (n-1) }{d} \right\rfloor.
		\] 
		For $J \subset [n-1]$, we define the deterministic subspace
		\[
		\mathcal{U}_J := \{f \in \mathcal{P}_{r,d}: \hat{f}(S) = 0 \text{ whenever } |S| =d \text{ and } 
		S \cap J = \emptyset\}.
		\]
		Thus, every set in the Fourier support of $f^{=d}$, for $f \in \mathcal{U}_J$, meets $J$.  
		The dimension of $\mathcal{U}_J$ is  
		\begin{align*}
			L_J &= \dim \mathcal{U}_J = K(n-1,d-1) + \binom{n-1}{d} - \binom{n-1-|J|}{d} \\
			&\leq K(n-1,d-1) + |J| \binom{n-1-1}{d-1}.
		\end{align*}
		The inequality follows by assigning to every $d$-set meeting $J$ one distinguished vertex in its intersection with $J$. 
		
		For fixed $d$,
		\[
		\frac{K(n-1,d-1)}{K(n-1,d)} \rightarrow 0 \text{ and } \frac{\binom{n-1-1}{d-1}}{K(n-1,d)} \leq 
		\frac{\binom{n-2}{d-1}}{\binom{n-1}{d}} = \frac{d}{n-1}.
		\]
		Choose $\alpha_d$ small enough such that $d \alpha_d \leq p_d/16$.  For all sufficiently large $n$, 
		we also have $K(n-1,d-1) \leq p_d K/16$ and $m_0 \geq K/2$.  Therefore, whenever $|J| \leq \alpha_d (n-1)$,
		\[
		L_J \leq \frac{p_dK}{8}\leq \frac{p_d m_0}{2}.
		\]
		Lemma \ref{lem:injective} allows us to deduce that for every fixed $J$,
		\[
		\P(\mathcal{W}_0 \cap \mathcal{U}_J \neq \{0\} )\leq e^{-p_d m_0/8}.
		\]
		
		Now, we take a union bound over $J$.  If $d \geq 2$, then
		\[
		\sum_{j \leq \alpha_d (n-1)} \binom{n-1}{j} \leq 2^{n-1},
		\]
		and $m_0 = \Theta_d((n-1)^d)$.  Thus, 
		\begin{align} \label{eq:intersectUJ}
			\P(\exists J, |J| \leq \alpha_d (n-1): \mathcal{W}_0 \cap \mathcal{U}_J \neq \{0\})
			&\leq 2^{n-1} e^{-p_d m_0/8} \nonumber \\
			&\leq e^{-c'_d (n-1)^d}.
		\end{align}
		For $d = 1$, one can choose $\alpha_1$ even smaller so that the binary entropy
		$h(\alpha_1) = -\alpha_1 \log \alpha_1 - (1 - \alpha_1) \log (1 - \alpha_1)$ satisfies
		$h(\alpha_1) < 1/64$.  Then,
		\[
		\sum_{j \leq \alpha_1 (n-1)} \binom{n-1}{j} \leq e^{h(\alpha_1) (n-1)}
		\]  
		and since $p_1 = 1/2$ and $m_0 = \Theta(n)$, the same union bound is at most $e^{-c'_1 (n-1)}$.
		
		We now work on the complement of the event in \eqref{eq:intersectUJ}.  Suppose that $0 \neq f \in \mathcal{W}_0$
		and $\nu_d(f) < R$.  Take a maximal matching in $\mathcal{H}_d(f)$ and let $J$ be the 
		union of its edges.  Then,
		\[
		|J| < dR \leq \alpha_d (n-1).  
		\]
		Maximality implies that every edge of $\mathcal{H}_d(f)$ intersects $J$.  Otherwise an edge disjoint from $J$ could be added to the matching.  Thus, $f \in \mathcal{U}_J$ contradicting 
		$\mathcal{W}_0 \cap \mathcal{U}_J = \{0\}$.  We conclude that every non-zero $f \in \mathcal{W}_0$
		satisfies
		\[
		\nu_d(f) \geq R \geq \kappa_d (n-1)
		\]
		for some constant $\kappa_d > 0$ and all large $n$.  
		
		Note that this also excludes non-zero polynomials of degree less than $d$ from the kernel as such a polynomial belongs to $\mathcal{U}_\emptyset$.  
	\end{proof}
	
	\section{Polynomial Anti-concentration and the last exposure} \label{sec:lastexposure}
	Anti-concentration measures how unlikely a random variable is to land in a particular set.  Littlewood-Offord theory studied the anti-concentration of sums of independent random variables \cite{MR9656, MR14608}.  Anti-concentration of linear and quadratic sums has played an important role in random matrix theory \cite{MR2480613, MR2407948, MR2955045, MR4273471, MR4076632}. 
	We invoke the theory of anti-concentration for higher-degree polynomials.  
	\begin{definition}\label{def:rank}
		For a multi-linear polynomial $q$ of the form
		\begin{equation} \label{eq:polydef}
			q(x_1, \dots, x_k) = \sum_{S \subset [k]: |S| \leq d} a_S x^S,
		\end{equation}
		the \emph{rank} of $P$ is the largest integer $r$ such that there exist disjoint sets
		$S_1, \dots, S_r \subset [k]$ of size $d$ with $|a_{S_j}| > 1$ for $j \in [r]$.  
	\end{definition}
	\begin{theorem}\cite[Theorem 1.6]{MR3542863} \label{thm:polyanticoncentration}
		There is an absolute constant $B$ such that the following holds for all $d$ and $n$.  Let $q$ 
		be a polynomial as in \eqref{eq:polydef}.
		If the rank, $r$, of $P$ is greater than or equal to 2, then for any interval $I$ of length 1, and for $\bm{x}$ uniformly distributed in $\{\pm 1\}^k$, 
		\begin{equation} \label{eq:polyanticoncentration}
			\P(q(\bm{x}) \in I) \leq \min\left\{\frac{B d^{4/3} \sqrt{\log r}}{r^{1/4d+1}}, \frac{\exp(B d \log d \log \log r+ B d^2 \log d)}{\sqrt{r}}\right\}.
		\end{equation}
	\end{theorem}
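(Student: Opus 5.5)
The plan is to prove both bounds in \eqref{eq:polyanticoncentration} by induction on the degree $d$. The inductive step will reduce a degree-$d$ polynomial of rank $r$ to a \emph{linear} form in about $r$ fresh Rademacher variables whose coefficients are degree-$(d-1)$ polynomials in the remaining variables. The base case $d=1$ is the Erd\H{o}s--Littlewood--Offord inequality. If at least $r$ coefficients satisfy $|a_i|>1$, then any interval of length $1$ contains $\sum a_i x_i$ with probability $O(1/\sqrt r)$. The key point is that $|a_i|>1$ exceeds the interval length, so the classical Sperner argument applies.

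For the inductive step, fix disjoint $d$-sets $S_1,\dots,S_r$ with $|a_{S_j}|>1$. From each $S_j$, choose a variable $i_j$ uniformly at random, independently over $j$, and let $T=\{i_1,\dots,i_r\}$. Then keep only a random subset $T'\subset T$ in which each element survives with probability about $1/d$. This random selection controls monomials that contain two or more variables of $T'$. Conditioning on the variables outside $T'$, I would write
\[
q(\bm x)=\sum_{i\in T'} \bm b_i\,x_i+\text{(terms with $\ge 2$ variables of $T'$)}+\bm c,
\]
where each $\bm b_i$ is a degree-$(d-1)$ polynomial in the conditioned variables. The higher-order terms in $T'$ will not simply vanish. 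So I would instead apply a decoupling argument in the style of Costello--Tao--Vu to pass from $\P(q\in I)$ to an estimate for a decoupled difference. This difference is linear in a Rademacher vector indexed by $T'$ and costs only a square root and constants depending on $d$. The coefficient $\bm b_{i_j}$ contains the monomial $a_{S_j}x^{S_j\setminus\{i_j\}}$. The sets $S_j\setminus\{i_j\}$ are disjoint $(d-1)$-sets, so each $\bm b_{i_j}$ has rank at least $1$ as a polynomial in its own variables. However, the family $\{\bm b_{i_j}\}$ collectively depends on shared variables.

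The main obstacle is to show that, with high probability over the conditioned variables, at least $r'\gtrsim r/\mathrm{polylog}(r)$ of the $\bm b_i$ satisfy $|\bm b_i|>1$. This step requires anti-concentration of each $\bm b_i$ \emph{away from $[-1,1]$}, which is the inductive hypothesis at degree $d-1$. The rank of an individual $\bm b_{i_j}$ may only be $1$, so I would first run a regularity-type dichotomy. In the first case, many $\bm b_{i_j}$ have large rank, and induction gives $\P(|\bm b_{i_j}|\le 1)$ small. In the second case, the degree-$d$ structure is concentrated on few variables; these are fixed, and we pass to a restricted polynomial that still has large rank. Iterating this dichotomy over at most $O(\log\log r)$ scales, with a loss of a factor $d^{O(1)}$ at each scale, produces the factor $\exp(Bd\log d\log\log r+Bd^2\log d)$. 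Markov's inequality on the number of bad coefficients then gives $r'$ good coefficients outside an event of negligible probability, and the base case finishes the second bound.

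The first bound, $Bd^{4/3}\sqrt{\log r}/r^{1/(4d+1)}$, I would obtain separately. The first step is an invariance principle (Mossel--O'Donnell--Oleszkiewicz), applied after regularizing so that all influences are small. The second step is the Carbery--Wright inequality for the resulting Gaussian polynomial, which gives the $\varepsilon^{1/d}$-type dependence. The high-influence case is handled by conditioning on the few influential variables, and the rank hypothesis ensures that the variance remains $\gtrsim r$. Optimizing the regularity threshold against the invariance error yields the exponent $1/(4d+1)$. The final bound is the minimum of the two estimates.
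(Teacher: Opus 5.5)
\textbf{Verdict: genuine gap.} The paper does not prove this statement; it quotes it from Meka--Nguyen--Vu \cite{MR3542863} and uses only the second branch, for fixed $d$, in Proposition~\ref{prop:anticoncentration}. So you are reproving that theorem from scratch. Your outline for the first bound (regularize, invariance principle, Carbery--Wright, rank keeping the variance of order $r$ after fixing few variables) is the standard route and plausible in outline. Your second-bound argument, which is the one the paper needs, fails at the decoupling step.

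Write $q=\sum_{A\subseteq T'}x^A c_A(x_V)$. A Costello--Tao--Vu decoupling is a Cauchy--Schwarz step. Duplicating the conditioned variables gives
\[
\P(q\in I)^2\le\P\bigl(q(x_{T'},x_V)-q(x_{T'},x'_V)\in[-1,1]\bigr),\qquad q(x_{T'},x_V)-q(x_{T'},x'_V)=\sum_{A\subseteq T'}x^A\bigl(c_A(x_V)-c_A(x'_V)\bigr).
\]
Duplicating $x_{T'}$ instead does not give a linear form indexed by $T'$ either. There are two problems.
\begin{enumerate}
\item For $d\ge 3$, every term with $|A|\ge 2$ survives, so the difference is not linear in $x_{T'}$. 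Choosing one variable per $S_j$ and thinning at rate $1/d$ cannot remove these terms. Away from the $S_j$, the support of $q$ is arbitrary: it may contain every $d$-set, with arbitrarily large coefficients. Linearizing by iterated differencing costs another square root per step. The $d-1$ steps needed lower the exponent from $1/2$ to $2^{-d}$.
\item Even for $d=2$, where one differencing does linearize, the method only yields $\P(q\in I)\le (C/\sqrt{r'})^{1/2}\approx r^{-1/4}$. A square root of the probability is not a constant-factor loss. Since Erd\H{o}s's bound for the resulting linear form is sharp, this route can never reach $\exp(\cdots)/\sqrt r$.
\end{enumerate}
A proof of that branch must obtain anti-concentration of the restricted polynomial with no power loss in the main term. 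Your proposal supplies no such mechanism.

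Separately, the step you call the main obstacle carries the real content, and it is only asserted. For Markov's inequality on the bad coefficients to give failure probability $\mathrm{polylog}(r)/\sqrt r$, you must apply the degree-$(d-1)$ hypothesis to coefficient polynomials $\bm b_{i_j}$ of rank $\gtrsim r/\mathrm{polylog}(r)$. As you note yourself, only rank $1$ is guaranteed. The regularity-type dichotomy meant to bridge this is not specified. It does not say:
\begin{enumerate}
\item what large rank means for the $\bm b_{i_j}$;
\item why fixing the concentrated variables makes progress rather than reproducing the same situation;
\item why $O(\log\log r)$ scales suffice.
\end{enumerate}
The quantitative accounting is also not derived. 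A $d^{O(1)}$ loss over $O(\log\log r)$ scales gives $\exp(O(\log d\,\log\log r))$, which does not match the stated $\exp(Bd\log d\log\log r+Bd^2\log d)$. As written, the second bound is not established.
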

	
	In our setting of fixed $d$, the second term on the right-hand side of \eqref{eq:polyanticoncentration} will be the tighter bound.  We record this consequence 
	in the following proposition.  
	
	\begin{proposition}\label{prop:anticoncentration}
		There is an absolute constant $B$ such that the following holds.  Let $f \in \mathcal{P}_{n-1,d}$
		and suppose that $\mathcal{H}_d(f)$ contains $R = \nu_d(f) \geq 2$ pairwise disjoint edges with non-zero 
		coefficients.  Then for $\bm{y}$ uniformly distributed in $\{\pm 1\}^n-1$,
		\begin{equation} \label{eq:secondbranch}
			\sup_{u \in \R} \P(f(\bm{y}) = u) \leq \frac{\exp(B d \log d \log \log R + B d^2 \log d)}{\sqrt{R}}.
		\end{equation}
		Consequently, for the constant $\kappa_d$ in Proposition \ref{prop:matching}, there exists 
		a constant $C_d > 0$ such that whenever $\nu_d(f) \geq \kappa_d (n-1)$ and $n$ is sufficiently
		large,
		\begin{equation} \label{eq:qanticoncentration}
			\sup_{u \in \R} \P(f(\bm{y} = u)) \leq q_{n,d}
		\end{equation}
		where 
		\[
		q_{n,d} = C_d \frac{(\log (n-1))^{a_d}}{\sqrt{n-1}} \quad (a_d = B d \log d).  
		\]
	\end{proposition}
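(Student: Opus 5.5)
The plan is to deduce \eqref{eq:secondbranch} from Theorem \ref{thm:polyanticoncentration} by rescaling $f$, and then to specialize to $R\ge\kappa_d(n-1)$.

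\emph{Step 1: rescaling.} Let $S_1,\dots,S_R$ be pairwise disjoint edges of $\mathcal{H}_d(f)$, so $\hat f(S_j)\neq 0$ for each $j$. Set $a=\min_j|\hat f(S_j)|>0$ and $g=(2/a)f$, a multilinear polynomial in $k=n-1$ variables of the form \eqref{eq:polydef}. Then $|\hat g(S_j)|\ge 2>1$ for all $j$. Hence the rank $r$ of $g$ in the sense of Definition \ref{def:rank} satisfies $r\ge R\ge 2$.

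\emph{Step 2: from atoms to intervals.} For any $u\in\R$, take an interval $I$ of length $1$ containing $2u/a$. Then
\[
\P(f(\bm y)=u)=\P(g(\bm y)=2u/a)\le \P(g(\bm y)\in I).
\]
Theorem \ref{thm:polyanticoncentration}, keeping only its second branch, bounds this by $\varphi(r)\,e^{Bd^2\log d}$, where $\varphi(r)=\exp(c\log\log r)/\sqrt r=(\log r)^c/\sqrt r$ and $c=Bd\log d$.

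\emph{Step 3: monotonicity.} We need to replace $r$ by $R$, and this is the only delicate point, since $\varphi$ is not monotone for small $r$. Compute $\frac{d}{dr}\log\varphi(r)=\frac{1}{r}\bigl(\frac{c}{\log r}-\frac12\bigr)$. So $\varphi$ is decreasing on $\{\log r>2c\}$; when $d=1$ we have $c=0$ and $\varphi$ is decreasing everywhere.

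\begin{itemize}
\item If $\log R>2c$, then $\varphi(r)\le\varphi(R)$ because $r\ge R$, and \eqref{eq:secondbranch} follows.
\item If $\log R\le 2c$ with $d\ge2$, then $\sqrt R\le e^{c}=e^{Bd\log d}\le e^{Bd^2\log d}$. We also have $\log\log R\ge \log\log 2>-1$; if needed, enlarge $B$ to absorb the case where $\log\log R$ is slightly negative. The right side of \eqref{eq:secondbranch} is then at least $1$, so the inequality holds trivially.
\end{itemize}

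\emph{Step 4: the consequence \eqref{eq:qanticoncentration}.} Suppose $R=\nu_d(f)\ge\kappa_d(n-1)$, so $R\ge 2$ for large $n$. Since $R\le n-1$, we have $\log\log R\le\log\log(n-1)$, which gives
\[
\exp(Bd\log d\,\log\log R)\le(\log(n-1))^{a_d}.
\]
For the denominator, $\sqrt R\ge\sqrt{\kappa_d}\sqrt{n-1}$. Combining these, we may take $C_d=e^{Bd^2\log d}/\sqrt{\kappa_d}$, which yields \eqref{eq:qanticoncentration} with $q_{n,d}$ as stated.

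The main obstacle is the bookkeeping in Step 3: the non-monotonicity of $\varphi$, and the requirement $|a_S|>1$ in Definition \ref{def:rank}, which the rescaling in Step 1 handles.
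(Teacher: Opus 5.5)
Your proof is correct and follows the paper's argument: rescale $f$ so the coefficients on a maximum matching of $\mathcal{H}_d(f)$ exceed $1$ in modulus, apply the second branch of Theorem~\ref{thm:polyanticoncentration} to a unit interval containing the target value, and then use $\kappa_d(n-1)\le R\le n-1$. Your factor $2/a$ is slightly more careful than the paper's $1/\delta$, which only gives $|a_{S_j}|\ge 1$ against the strict inequality in Definition~\ref{def:rank}. Your Step~3 is harmless but unnecessary: rescaling leaves the degree-$d$ support unchanged, so $r\le\nu_d(f)=R$, and hence $r=R$ exactly.
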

	
	\begin{proof}
		Fix $u \in \R$ and take a maximal matching in $S_1, \ldots, S_R$ in $\supp (\widehat{f^{=d}})$.  Let 
		\[
		\delta = \min_{1 \leq j \leq R} |\hat{f}(S_j)| > 0
		\]
		and now define
		\[
		g(y) = \frac{f(y) - u}{\delta}
		\]
		The selected degree-$d$ coefficients of $g$ have magnitude at least 1 so the polynomial rank in the sense of definition \ref{def:rank} is at least $R$.  We apply Theorem \ref{thm:polyanticoncentration} to the open interval $(-1/2, 1/2)$ with the second term in the minimum and note that
		\[
		\{f(\bm{y})  = u\} = \{g(\bm{y}) = 0\} \subset \{g(\bm{y}) \in (-1/2, 1/2)\}.
		\]
		This proves \eqref{eq:secondbranch}.
		
		Since $\kappa_d (n-1) \leq R \leq (n-1)/d$,
		\[
		\frac{\exp(B d \log d \log \log R + B d^2 \log d)}{\sqrt{R}} \leq \frac{e^{B d^2 \log d}}{\sqrt{\kappa_d}} \frac{ (\log (n-1))^{B d \log d}}{\sqrt{n-1}}.
		\]
		Thus, any $C_d \geq e^{Bd^2 \log d}/\sqrt{\kappa_d}$ suffices for \eqref{eq:qanticoncentration}.  
	\end{proof}

	\subsection{Sequential exposures and a union bound}
	In this section we show that each new row of $M$, corresponding to an independent sample, is likely to increase the rank.  Once the total number of rows is $K$, we can conclude that $M_K$ is non-singular.  
	
	\begin{proposition} \label{prop:errorbound}
		Fix $d \geq 1$ and let $n$ be sufficiently large and set $K = K(n-1, d)$.  
		For every integer $1 \leq s \leq \lfloor K/2 \rfloor$,
		\[
		\P(M_K \text{ is singular}) \leq 2^{-s} + C_d K 2^{-(n-1)/2} + e^{-c_d (n-1)^d} + e^{-c'd (n-1)^{d}} + s q_{n-1, d}.
		\]
		The first three terms originate from Proposition \ref{prop:firstmoment}, the fourth from Proposition \ref{prop:matching}, and the last from the union bound over the final $s$ samples.  
	\end{proposition}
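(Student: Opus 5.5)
Set $m_0 = K - s$, so that $m_0 \geq K/2$ because $s \leq \lfloor K/2 \rfloor$. The plan is to expose the first $m_0$ samples, use the kernel estimates to control the random subspace $\mathcal{W}_0 = \ker \mathsf{Ev}_{m_0}$, and then expose the last $s$ samples one at a time.

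Let $E_1 = \{\rank M_{m_0} < m_0\}$. By \eqref{eq:rankbound}, which is Proposition \ref{prop:firstmoment} with $m = K-s$,
\[
\P(E_1) \leq 2^{-s} + C_d K 2^{-(n-1)/2} + e^{-c_d(n-1)^d}.
\]
Let $E_2$ be the event that some non-zero $f \in \mathcal{W}_0$ has $\nu_d(f) < \kappa_d(n-1)$. Proposition \ref{prop:matching} applies because $K/2 \leq m_0 \leq K-1$, and gives $\P(E_2) \leq e^{-c'_d(n-1)^d}$. Off $E_1 \cup E_2$ we have $\dim \mathcal{W}_0 = K - m_0 = s$, and every non-zero element of $\mathcal{W}_0$ has matching number at least $\kappa_d(n-1)$. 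For large $n$ this is at least $2$, so Proposition \ref{prop:anticoncentration} applies to it.

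For $t = m_0, \dots, K-1$, let $\mathcal{W}_t = \ker \mathsf{Ev}_t \subseteq \mathcal{W}_0$ and $\mathcal{F}_t = \sigma(\bm{y}^{(1)}, \dots, \bm{y}^{(t)})$. Let $G_t$ be the event that $\mathcal{W}_t \neq \{0\}$ and $\dim \mathcal{W}_{t+1} = \dim \mathcal{W}_t$. If $\mathcal{W}_t \neq \{0\}$, choose a non-zero $f_t \in \mathcal{W}_t$ measurably with respect to $\mathcal{F}_t$, as in the proof of Lemma \ref{lem:injective}. If $\mathcal{W}_{t+1} = \mathcal{W}_t$, then every element of $\mathcal{W}_t$ vanishes at $\bm{y}^{(t+1)}$, and in particular $f_t(\bm{y}^{(t+1)}) = 0$. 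This gives
\[
G_t \subseteq \{f_t(\bm{y}^{(t+1)}) = 0\}.
\]
On the $\mathcal{F}_{m_0}$-measurable event $(E_1 \cup E_2)^c$, the polynomial $f_t$ lies in $\mathcal{W}_0 \setminus \{0\}$, so $\nu_d(f_t) \geq \kappa_d(n-1)$. Since $\bm{y}^{(t+1)}$ is independent of $\mathcal{F}_t$, Proposition \ref{prop:anticoncentration} with $u = 0$ gives
\[
\P\big(G_t \cap (E_1 \cup E_2)^c \mid \mathcal{F}_t\big) \leq q_{n,d},
\]
where $q_{n,d}$ is as defined in Proposition \ref{prop:anticoncentration}; this is the quantity written $q_{n-1,d}$ in the statement. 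Taking expectations bounds the probability of each $G_t \cap (E_1 \cup E_2)^c$ by $q_{n,d}$.

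Now suppose $M_K$ is singular and we are off $E_1 \cup E_2$. Each new sample lowers the kernel dimension by at most one. The dimension starts at $s$ and is still positive after all $s$ remaining samples, so at some step $t \in [m_0, K-1]$ it failed to drop while the kernel was non-zero; that is, $G_t$ occurred. A union bound over the $s$ values of $t$ gives
\[
\P(M_K \text{ singular}) \leq \P(E_1) + \P(E_2) + s\, q_{n,d},
\]
which is exactly the claimed bound.

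The main delicacy is that $f_t$ depends on the earlier samples. This is handled by the uniformity in Proposition \ref{prop:matching}: one event, measurable at time $m_0$, certifies the matching structure simultaneously for all later kernels, because $\mathcal{W}_t \subseteq \mathcal{W}_0$. After that, the conditional anti-concentration bound applies to the fresh point $\bm{y}^{(t+1)}$ at every step.
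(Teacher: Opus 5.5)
Your proposal is correct and follows the paper's proof essentially step for step. You expose $m_0=K-s$ samples and intersect the near-square rank event from \eqref{eq:rankbound} with the uniform matching event of Proposition~\ref{prop:matching}, giving a single $\mathcal{F}_{m_0}$-measurable good event; then at each of the last $s$ exposures you apply Proposition~\ref{prop:anticoncentration} conditionally to a measurably chosen nonzero $f_t\in\mathcal{W}_t\subseteq\mathcal{W}_0$ and finish with a union bound, exactly as the paper does.
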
  
	\begin{proof}
		Set $m_0 = K - s$.  We expose the first $m_0$ samples.  By \eqref{eq:rankbound}, outside an 
		event of probability at most $2^{-s} + \Delta_{n, d}$, we have
		\[
		\rank M_{m_0} = m_0 \quad \text{and} \quad  \dim \mathcal{W}_0 = s
		\]
		where we recall that $\mathcal{W}_0 = \ker \mathsf{Ev}_{m_0}$.  
		Since $K/2 \leq m_0 \leq K-1$, we can apply Proposition \ref{prop:matching} to conclude that
		outside of an event with probability bounded above $e^{-c'_d (n-1)^d}$, every non-zero $f \in \mathcal{W}_0$ has $\nu_d(f) \geq \kappa_d (n-1)$.  Let $\mathcal{E}$ be the intersection of these
		two good events.  $\mathcal{E}$ depends only on the first $m_0$ samples and 
		\begin{equation} \label{eq:badevent}
			\P(\mathcal{E}^c) \leq 2^{-s} + \Delta_{n,d} + e^{-c'_d (n-1)^d}.
		\end{equation}
		On $\mathcal{E}$, we expose the remaining $s$ samples one at a time and define for $1 \leq j \leq s$,
		\[
		\mathcal{W}_j = \{f: \mathcal{W}_{j-1}: f(\bm{y}^{(m_0 + j)}) = 0\}.
		\]
		A new evaluation is a single linear functional so the kernel dimension can drop by zero or one.  
		In particular,
		\[
		\dim \mathcal{W}_{j-1} \geq s - j + 1 > 0
		\]
		for $1 \leq j \leq s$.  Choose a non-zero $f_j \in \mathcal{W}_{j-1}$ by some fixed, measurable rule. Since $\mathcal{W}_{j-1} \subset \mathcal{W}_0$, Proposition \ref{prop:matching} gives, on $\mathcal{E}$,
		\[
		\nu_d(f_j) \geq c_d (n-1).  
		\]
		
		Let $\mathcal{F}_t = \sigma(\bm{y}^{(1)}, \ldots, \bm{y}^{(t)})$.  Conditioning on $\mathcal{F}_{m_0 + j - 1}$, the function $f_j$ is fixed and the next sample is independent and uniform.  If the new evaluation functional vanishes identically on $\mathcal{W}_{j-1}$ then in particular, $f_j(\bm{y}^{m_0 + j}) = 0$.  Proposition \ref{prop:anticoncentration} gives, on 
		$\mathcal{E}$ that 
		\[
		\P(\dim \mathcal{W}_j = \dim \mathcal{W}_{j-1} | \mathcal{F}_{m_0 + j - 1)} \leq q_{n-1,d}).
		\]
		If none of these $s$ possible failures occur, the dimension drops from $s$ to zero.  A conditional union bound therefore yields
		\begin{equation} \label{eq:conditionalunion}
			\P(\mathcal{E} \cap \{\mathcal{W}_s \neq \{0\}) \leq s q_{n-1, d}.
		\end{equation}
		Finally, $\mathcal{W}_s = \ker \mathsf{Ev}_K$ and a linear map between two $K$-dimenisonal spaces is singular exactly when its kernel is non-zero.  Combining \eqref{eq:badevent} and \eqref{eq:conditionalunion} completes the proof.
	\end{proof}

	\subsection{Choosing the parameters and completing the proof of Theorem \ref{thm:characterevaluation}}
	\begin{proof}[Proof of Theorem \ref{thm:characterevaluation}] 
		Choose $s = \left \lceil \frac{1}{2} \log_2 (n-1) \right \rceil$.  Since $K \geq n$, this choice
		satisfies
		\[
		1 \leq s \leq \lfloor K/2 \rfloor
		\]
		for all sufficiently large $n$.  Moreover,
		\[
		2^{-s} \leq (n-1)^{-1/2} \quad \text{ and } \quad s \leq 1 + \frac{\log (n-1)}{2 \log 2}.
		\]
		Proposition \ref{prop:errorbound} therefore gives
		\begin{align*}
			\P(M_K \text{ is singular}) &\leq \frac{1 + C_d (1 + \frac{\log (n-1)}{2 \log 2})(\log (n-1))^{a_d}}{\sqrt{r}} \\
			&\quad \quad + C_d K 2^{(n-1)/2} + e^{c_d (n-1)^d} + e^{-c'_d (n-1)^d}.
		\end{align*}
		As $K = O_d(n^d)$, 
		\[
		C_d K 2^{(n-1)/2} \leq C_d' e^{-(\log 2)(n-1)/4}.
		\]
		Also, $(n-1)^d \geq (n-1)$ so if we let
		\[
		b_d = \min\{\log 2/ 4, c_d, c'_d\} > 0 
		\]
		then after adjusting the constant $C_d$, we have
		\[
		\P(M_K \text{ is singular}) \leq C_d \frac{(\log n)^{a_d + 1}}{\sqrt{n}} + C_d e^{-b_d n}.
		\]
	\end{proof}
	
	\section{Applications} \label{sec:applications}
	\subsection{Simplicial faces of correlation matrices}\label{sec:elliptope}
	Set $d=2$ and $K=D(n,2)=1+\binom n2$.  For sign vectors $s^{(1)},\ldots,s^{(n)}\in\{\pm 1\}^m$, define
	\begin{equation}\label{eq:schur}
		S=[s^{(1)}\ \cdots\ s^{(n)}],\qquad
		Z=\bigl[\bm{1}_m\ \ (s^{(i)}\odot s^{(j)})_{i<j}\bigr]\in\R^{m\times K},
	\end{equation}
	where $\odot$ is coordinatewise multiplication. The family is \emph{Schur independent} if $\rank Z=K$ \cite[Definition~2.1]{kueng2021binary}. After transposition, $n$ counts components and $m$ is their ambient dimension.
	
	\begin{lemma}\label{lem:schur}
		For independent uniform $\bm{s}^{(1)},\ldots,\bm{s}^{(n)}\in\{\pm 1\}^m$, and all sufficiently large $n$, uniformly in $m\ge D(n,2)$,	$\P[\rank\bm{Z}<K]\le C_2\frac{(\log n)^{C}}{\sqrt n}$ for some universal constant $C > 0$.
	\end{lemma}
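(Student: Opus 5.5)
The plan is to read the rows of $\bm{Z}$ as tensor powers and apply Theorem~\ref{thm:main} with $d=2$. For $k\in[m]$, set $\bm{x}^{(k)}=(\bm{s}^{(1)}_k,\ldots,\bm{s}^{(n)}_k)\in\{\pm1\}^n$. The entries $\bm{s}^{(i)}_k$ are independent uniform signs over all pairs $(i,k)$. Hence $\bm{x}^{(1)},\ldots,\bm{x}^{(m)}$ are independent uniform points of $\{\pm1\}^n$. The $k$-th row of $\bm{Z}$ is
\[
\bigl(1,\ (\bm{x}^{(k)}_i\bm{x}^{(k)}_j)_{i<j}\bigr)\in\R^{K}.
\]
These are exactly the distinct entries of $\bm{x}^{(k)}(\bm{x}^{(k)})^\top$: the diagonal entries are all equal to $1$, and entry $(j,i)$ repeats entry $(i,j)$.

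First I would show that rank is preserved between the tensors and the rows. There is a fixed linear map $A:\R^K\to\R^{n^2}$ that sends the row vector $r_k$ to $\mathrm{vec}(\bm{x}^{(k)}(\bm{x}^{(k)})^\top)$. It puts the constant coordinate in every diagonal slot and copies each $x_ix_j$ into both slots $(i,j)$ and $(j,i)$. The coordinate projection $B$ that picks one representative slot for each coordinate satisfies $BA=I$. So $A$ is injective, exactly as in Lemma~\ref{lem:reduction}, and therefore
\[
\rank \bm{Z}=\dim\operatorname{span}\{(\bm{x}^{(k)})^{\otimes 2}:k\le m\}.
\]
This rank is always at most $K=D(n,2)$.

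Next I would handle a general $m\ge K$. Deleting rows can only lower the rank, so $\rank\bm{Z}$ is at least the rank of the submatrix formed by the first $K$ rows. That submatrix has rank $K$ exactly when $(\bm{x}^{(1)})^{\otimes2},\ldots,(\bm{x}^{(K)})^{\otimes2}$ are linearly independent. Theorem~\ref{thm:main} with $d=2$ and $m=K=D(n,2)$ then gives
\[
\P[\rank\bm{Z}<K]\le p_{n,2}(D(n,2))\le C_2\frac{(\log n)^{a_2+1}}{\sqrt n}.
\]
This bound does not depend on $m$, so it holds uniformly in $m\ge D(n,2)$. One can take $C=a_2+1=2B\log 2+1$, which is universal because $B$ is absolute.

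I do not expect a real obstacle here. The only points needing care are the bookkeeping of the transposition, namely that rows of $\bm{Z}$ are indexed by coordinates $k$ and not by the components $i$, and the check that the identification with $x^{\otimes 2}$ preserves rank.
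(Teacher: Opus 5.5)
Your proposal is correct and follows essentially the same route as the paper's proof. You identify the rows of $\bm{Z}$ with the distinct coordinates of independent tensor squares, check that this identification preserves rank, apply Theorem~\ref{thm:main} with $d=2$ at $m=K$, and pass to $m>K$ by restricting to the first $K$ rows. Your version adds the explicit injective map $A$ and the value $C=a_2+1$, which the paper leaves implicit.
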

	\begin{proof}
		The rows $\x^{(a)}=(\bm{s}^{(1)}_a,\ldots,\bm{s}^{(n)}_a)$ of $\bm{S}$ are independent uniform points in $\{\pm 1\}^n$. Row $a$ of $\bm{Z}$ is $(1,(\x^{(a)}_i\x^{(a)}_j)_{i<j})$, the distinct coordinates of
		$(\x^{(a)})^{\otimes2}$. Deleting repeated tensor coordinates preserves rank. At $m=K$, invertibility of $\bm{Z}$ is therefore exactly independence of $K$ tensor samples so we can apply Theorem \ref{thm:main}. For $m>K$, apply the theorem to the first $K$ rows.  Additional rows cannot lower column rank. 
	\end{proof}

	\subsection{The deterministic face criterion}
	The \emph{elliptope} and the \emph{cut polytope} are, respectively,
	\[
	\mathcal{E}_m=\{H=H^\top\succeq0:\diag H=\bm{1}_m\},\qquad
	\mathcal{C}_m=\text{conv}\{ss^\top:s\in\{\pm 1\}^m\}\subseteq\mathcal{E}_m.
	\]
	An exposed face is the set minimizing a linear functional over a convex set; it is \emph{simplicial} when its vertices are affinely independent. The next criterion is due to Laurent--Poljak \cite{laurent1996facial}; see Tropp \cite[Fact~3.1]{tropp2018simplicial}. We include the argument to specify what the rank hypothesis provides.
	
	\begin{lemma}\label{lem:face}
		If the family in~\eqref{eq:schur} is Schur independent, then $\mathcal{F}=\text{conv}\{s^{(i)}(s^{(i)})^\top:1\le i\le n\}$ is an exposed simplicial face of $\mathcal{E}_m$ of dimension $n-1$.
	\end{lemma}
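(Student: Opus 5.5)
The plan is to exhibit an explicit exposing functional and then use Schur independence twice: once for uniqueness (exposedness) and once for affine independence (simpliciality).

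\emph{Step 1: the exposing matrix.} Let $\Pi$ be the orthogonal projector onto $(\text{span}\{s^{(1)},\ldots,s^{(n)}\})^\perp\subseteq\R^m$, and use the linear functional $H\mapsto\langle \Pi,H\rangle=\tr(\Pi H)$. Since $\Pi\succeq0$ and $H\succeq0$, we have $\tr(\Pi H)\ge0$ on $\mathcal{E}_m$, with equality if and only if $\Pi H=0$. This holds if and only if the range of $H$ lies in $V:=\text{span}\{s^{(i)}\}$. Each $s^{(i)}(s^{(i)})^\top$ attains the value $0$. Hence the minimizing set is
\[
\mathcal{E}_m\cap\{H\succeq 0:\ \text{range}(H)\subseteq V\},
\]
and this contains $\mathcal{F}$. (Schur independence forces the $s^{(i)}$ to be linearly independent, though this is not strictly needed here.)

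\emph{Step 2: identifying the face.} Next I show the minimizing set equals $\mathcal{F}$. Any $H\succeq0$ with range in $V$ can be written $H=S G S^\top$ with $G=G^\top\succeq0$ an $n\times n$ matrix. The diagonal constraint reads
\[
\sum_{i,j}G_{ij}\,s^{(i)}_a s^{(j)}_a=1 \quad\text{for every } a.
\]
Because $(s^{(i)}_a)^2=1$, this becomes
\[
\tr G\cdot\bm 1_m+2\sum_{i<j}G_{ij}\,(s^{(i)}\odot s^{(j)})=\bm 1_m,
\]
a linear relation among the columns of $Z$. Since $\rank Z=K$, these columns are linearly independent. Therefore $G_{ij}=0$ for $i\ne j$ and $\tr G=1$. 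So $G$ is diagonal with nonnegative entries summing to one, and $H=\sum_i G_{ii}\,s^{(i)}(s^{(i)})^\top\in\mathcal{F}$. This is the key use of the hypothesis.

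\emph{Step 3: simpliciality and dimension.} Suppose $\sum_i\lambda_i s^{(i)}(s^{(i)})^\top=0$ with $\sum_i\lambda_i=0$. This is the case $G=\diag(\lambda)$ of the same computation, giving $\sum_i\lambda_i=0$ in the $\bm 1_m$-coordinate. For linear independence, however, I would argue differently. The vectors $s^{(i)}$ are linearly independent: a relation $\sum_i c_i s^{(i)}=0$ gives $S c=0$, so $S\diag(\lambda)S^\top$ determines $\lambda$ whenever $S$ has full column rank. To see that $S$ has full column rank from Schur independence, note that $\rank Z=K$ forces $m\ge K\ge n$. If $Sc=0$ with $c\ne0$, then
\[
0=\|Sc\|^2=\|c\|^2 m+2m\sum_{i<j}c_ic_j\,\frac{\langle s^{(i)},s^{(j)}\rangle}{m},
\]
which is less direct. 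I would instead take the cleaner route that $\sum_i\lambda_i s^{(i)}(s^{(i)})^\top=0$ has diagonal equal to $\sum_i\lambda_i\bm 1_m$ and off-diagonal entries $\sum_i\lambda_i s^{(i)}_as^{(i)}_b$. Pairing with the columns of $Z$ and using $Z^\top$-injectivity on the rows is awkward, though. The genuinely easy route is to note that $\mathcal{F}$ is the image of the simplex under $G\mapsto SGS^\top$ restricted to diagonal $G$. Step 2 shows this map is injective on symmetric $G$ satisfying the diagonal constraint. Applying that same linear-algebra identity to a difference of two diagonal $G$'s (so that the $\bm 1_m$ coefficient is $\tr$ of the difference, equal to $0$) shows that the $n$ vertices are affinely independent. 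Hence $\dim\mathcal{F}=n-1$.

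The main obstacle is Step 2: correctly parametrizing PSD matrices with range in $V$, and reducing the diagonal constraint exactly to a column relation of $Z$.
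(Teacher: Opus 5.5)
\textbf{Gap in Step 3.} Your Steps 1 and 2 are correct and match the paper's argument. You use the exposing functional $H\mapsto\tr(\Pi H)$, write $H=SGS^\top$, and reduce $\diag(SGS^\top)=\bm 1_m$ to a linear relation among the columns of $Z$. This shows that $\mathcal{F}$ is an exposed face of $\mathcal{E}_m$.

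The gap is in Step 3, and it concerns exactly the simplicial/dimension $n-1$ part of the statement. Your ``genuinely easy route'' does not work. For a diagonal matrix $D=\diag(\lambda)$, the diagonal of $SDS^\top$ is $(\sum_i\lambda_i)\bm 1_m$. So the identity from Step 2, applied to a difference of two diagonal $G$'s, reduces to $\tr D=0$. That is your hypothesis, and it says nothing about the individual $\lambda_i$.

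Step 2 shows only that every preimage of a point of the face is diagonal with unit trace. It does not show that the preimage is unique. So it gives no injectivity of $\tau\mapsto S\diag(\tau)S^\top$, and hence no affine independence of the vertices. The other routes you sketch are abandoned: the expansion of $\|Sc\|^2$ and the pairing with columns of $Z$. Your claim that Schur independence forces the $s^{(i)}$ to be linearly independent is asserted but never proved, yet it is exactly what this step needs.

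\textbf{The missing idea.} Suppose $\sum_i b_i s^{(i)}=0$. Multiply coordinatewise by $s^{(1)}$ to get
$b_1\bm 1_m+\sum_{i\ge 2} b_i\,(s^{(1)}\odot s^{(i)})=0$.
This is a relation among distinct columns of $Z$, so Schur independence gives $b=0$. Hence $S$ has full column rank and a left inverse $S^+$.

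Now $S\diag(\lambda)S^\top=0$ implies $\diag(\lambda)=S^+\bigl(S\diag(\lambda)S^\top\bigr)(S^+)^\top=0$. So the matrices $s^{(i)}(s^{(i)})^\top$ are linearly, hence affinely, independent, and $\mathcal{F}$ is an $(n-1)$-simplex. The paper does this step first. With it inserted at the start of your Step 3, your proof becomes the paper's.
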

	\begin{proof}
		First, since $S$ has full column rank, multiply a relation $\sum_i b_i s^{(i)}=0$ coordinatewise by $s^{(1)}$ to obtain a relation among $\bm{1}_m,s^{(1)}\odot s^{(2)},\ldots,s^{(1)}\odot s^{(n)}$, which are columns of $Z$. Thus every $b_i=0$.
		
		Let $Q$ be the orthogonal projector onto $\text{range}(S)^\perp$. For $H\succeq0$, $\tr(QH)\ge0$, with equality precisely when
		$\text{range}(H)\subseteq\text{range}(S)$. Consequently,
		\[
		\mathcal{G}=\{H\in\mathcal{E}_m:\tr(QH)=0\}
		\]
		is an exposed face. Each $H\in\mathcal{G}$ has a unique representation $H=SAS^\top$ with $A=A^\top\succeq0$, since $S$ has a left inverse. The diagonal equations are
		\[
		(\tr A-1)\bm{1}_m+2\sum_{i<j}A_{ij} (s^{(i)}\odot s^{(j)})=0.
		\]
		Schur independence forces $A_{ij}=0$ for $i\ne j$ and $\tr A=1$. Positive semidefiniteness gives $A_{ii}\ge0$. Thus,
		\begin{equation}\label{eq:face-description}
			\mathcal{G}=\{S\diag(\tau)S^\top:\tau_i\ge0,\ \sum_i\tau_i=1\}=\mathcal{F}.
		\end{equation}
		The map $A\mapsto SAS^\top$ is injective, so this is an affine copy of the standard $(n-1)$-simplex.
	\end{proof}
	
	\begin{theorem}[Random faces at the exact threshold]\label{thm:faces}
		For sufficiently large $n$ and every $m\ge D(n,2)$, independent uniform $\bm{s}^{(1)},\ldots,\bm{s}^{(n)}\in\{\pm 1\}^m$ satisfy
		\begin{align*}
		\P&\left[\text{conv}\{\bm{s}^{(i)}(\bm{s}^{(i)})^\top\}_{i=1}^n
		\text{ is an exposed simplicial face of }\mathcal{E}_m \text{ of dimension }n-1\right] \\
		&\qquad \qquad \qquad \qquad \qquad \qquad \qquad \qquad \qquad \qquad \qquad \qquad \geq 1-C_2\frac{(\log n)^{C}}{\sqrt n}.
		\end{align*}
		Every face obtained this way is also a face of $\mathcal{C}_m$.
	\end{theorem}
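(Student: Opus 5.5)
The plan is to combine Lemma \ref{lem:schur} with the deterministic criterion Lemma \ref{lem:face}. By Lemma \ref{lem:schur}, for all sufficiently large $n$ and uniformly in $m\ge D(n,2)$, the random matrix $\bm{Z}$ built from $\bm{s}^{(1)},\ldots,\bm{s}^{(n)}$ as in \eqref{eq:schur} satisfies $\rank\bm{Z}=K$ outside an event of probability at most $C_2(\log n)^C/\sqrt n$. On the complement the family is Schur independent. Since Lemma \ref{lem:face} is purely deterministic, it then shows that $\text{conv}\{\bm{s}^{(i)}(\bm{s}^{(i)})^\top\}$ is an exposed simplicial face of $\mathcal{E}_m$ of dimension $n-1$. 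This gives the probability bound immediately. The only bookkeeping is to confirm that $m\ge D(n,2)$ is exactly the range in which $\bm Z$ (an $m\times K$ matrix) can have full column rank, and that the first-$K$-rows reduction in Lemma \ref{lem:schur} applies uniformly in $m$.

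For the second assertion I would argue deterministically again. Let $\mathcal{F}$ be a face of $\mathcal{E}_m$ exposed by the linear functional $H\mapsto\tr(QH)$, with every vertex $s^{(i)}(s^{(i)})^\top$ lying in $\mathcal{C}_m$. Since $\mathcal{C}_m\subseteq\mathcal{E}_m$ and $\tr(QH)\ge 0$ on $\mathcal{E}_m$, the minimum of $\tr(Q\,\cdot)$ over $\mathcal{C}_m$ is also $0$, and it is attained at the points of $\mathcal{F}$. So $\mathcal{C}_m\cap\{\tr(QH)=0\}=\mathcal{C}_m\cap\mathcal{G}=\mathcal{F}$, because $\mathcal{F}=\mathcal{G}\subseteq\mathcal{C}_m$ by \eqref{eq:face-description} (the points of $\mathcal{G}$ are convex combinations of the cut matrices $s^{(i)}(s^{(i)})^\top$). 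Therefore $\mathcal{F}$ is an exposed face of $\mathcal{C}_m$ via the same functional, and it is simplicial of the same dimension.

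There is no real obstacle here, since the theorem is a corollary. The step that needs the most care is checking that the exposing argument in Lemma \ref{lem:face} does not need any hypothesis beyond Schur independence. In particular, the full column rank of $S$ is derived inside that lemma from Schur independence, so no extra probabilistic event is needed, and the constant $C_2$ and exponent $C$ are inherited directly from Lemma \ref{lem:schur}.
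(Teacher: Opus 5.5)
Your proposal is correct and follows the paper's proof exactly. Lemma~\ref{lem:schur} gives Schur independence outside the stated event, Lemma~\ref{lem:face} turns that deterministically into the exposed simplicial face, and the same functional $H\mapsto\tr(QH)$ exposes $\mathcal{F}$ in $\mathcal{C}_m$ because $\mathcal{F}\subseteq\mathcal{C}_m\subseteq\mathcal{E}_m$; your check that $\mathcal{C}_m\cap\{\tr(QH)=0\}=\mathcal{C}_m\cap\mathcal{G}=\mathcal{F}$ just spells out the paper's one-line remark.
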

	\begin{proof}
		Combine Lemmas~\ref{lem:schur} and~\ref{lem:face}. The same exposing
		functional works on $\mathcal{C}_m$, since $\mathcal{F}\subseteq\mathcal{C}_m\subseteq\mathcal{E}_m$.
	\end{proof}

	The classical dimension bound \cite[Fact~2.7]{tropp2018simplicial} is $\ell(\ell+1)\le2(m-1)$ for a simplicial face of dimension $\ell$. For $\ell=n-1$, this is exactly $m\ge D(n,2)$.
	Thus Theorem~\ref{thm:faces} reaches the first geometrically possible ambient dimension. In particular, set
	\[
	n_*(m)=\left\lfloor\frac{1+\sqrt{8m-7}}2\right\rfloor.
	\]
	With $n_*(m)$ random sign vertices, the resulting face has the maximum possible \emph{simplicial} dimension, $n_*(m)-1$, except with probability $C(\log m)^{C}m^{-1/4}$.
	These common faces identify portions of the cut-polytope geometry preserved by its semidefinite relaxation.

	\section{Sign-component identifiability and recovery}\label{sec:recovery}
	The second application is to Kueng--Tropp's sign-component factorization \cite{kueng2021binary}. A simplicial face does more than give unique weights in a known family.  The face property prevents alternative positive decompositions from using different extreme points.
	
	\begin{corollary}[Unknown sign components at the threshold]\label{cor:components}
		Under the hypotheses of Theorem~\ref{thm:faces}, with probability at least $1-C_2 \log^C n/ \sqrt{n}$ the following holds simultaneously for every $\tau_i>0$ with $\sum_i\tau_i=1$:
		\begin{equation}\label{eq:mixture}
			A_\tau=\sum_{i=1}^n\tau_i\bm{s}^{(i)}(\bm{s}^{(i)})^\top =\bm{S}\diag(\tau)\bm{S}^\top
		\end{equation}
		has rank $n$ and a unique positive sign-component representation, up to permutation, sign changes of the vectors, and merging repeated identical rank-one terms. The competing representation need not have $n$ components.
		For each fixed such input, Kueng--Tropp's polynomial-time Algorithm~1 recovers the components and weights with probability one over its internal randomness.
	\end{corollary}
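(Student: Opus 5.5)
On the good event of Theorem~\ref{thm:faces}, I would show that $\mathcal{F}=\text{conv}\{\bm{s}^{(i)}(\bm{s}^{(i)})^\top\}$ is an exposed simplicial face of $\mathcal{E}_m$, and then derive everything from the face property. The probability bound is exactly the one from Lemma~\ref{lem:schur} and Theorem~\ref{thm:faces}. The statement is deterministic on that event, and the event does not depend on $\tau$, so it holds simultaneously for all $\tau$.

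\emph{Rank of $A_\tau$.} The first step of the proof of Lemma~\ref{lem:face} shows that Schur independence forces $\bm{S}$ to have full column rank $n$. Since $\diag(\tau)$ is positive definite, $A_\tau=\bm{S}\diag(\tau)\bm{S}^\top$ has rank $n$.

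\emph{Uniqueness.} Suppose $A_\tau=\sum_{k}\sigma_k t^{(k)}(t^{(k)})^\top$ with $\sigma_k>0$ and $t^{(k)}\in\{\pm1\}^m$, where the number of terms is arbitrary. Taking the trace of the diagonal gives $\sum_k\sigma_k=1$, so this is a convex combination of points of $\mathcal{C}_m\subseteq\mathcal{E}_m$. Let $Q$ be the exposing functional from Lemma~\ref{lem:face}. Since $A_\tau\in\mathcal{F}$, we have $0=\tr(QA_\tau)=\sum_k\sigma_k\tr(Q t^{(k)}(t^{(k)})^\top)$, and every term is nonnegative. Hence each $t^{(k)}(t^{(k)})^\top$ lies in $\mathcal{G}=\mathcal{F}$.

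Next I would use that a rank-one matrix $tt^\top$ with $t\in\{\pm1\}^m$ is an extreme point of $\mathcal{E}_m$. Alternatively, via \eqref{eq:face-description}, write $tt^\top=\bm{S}\diag(\rho)\bm{S}^\top$. Rank one together with the full column rank of $\bm{S}$ forces $\rho$ to be a standard basis vector, so $t=\pm\bm{s}^{(i)}$ for some $i$. Grouping equal terms, the injectivity of $A\mapsto \bm{S}A\bm{S}^\top$ then gives that the merged weights equal $\tau_i$. This is precisely uniqueness up to permutation, signs and merging, and it needs no assumption on the number of competing components.

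\emph{Algorithm.} For the algorithmic claim I would invoke Kueng--Tropp's recovery theorem~\cite{kueng2021binary}. Its hypothesis is Schur independence of the components, which is the event above, and its conclusion is that Algorithm~1 recovers the components and weights almost surely over its internal randomness. The main obstacle is checking that their hypotheses involve only Schur independence (together with positive weights) and carry no oversampling condition on $m$. If some condition on $m$ is imposed there, I would trace their proof and replace that condition by the simplicial-face property established above, which is what their argument actually uses.
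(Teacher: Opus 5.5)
Your proposal is correct and matches the paper's proof. Both condition on Schur independence, obtain rank $n$ from the full column rank of $\bm{S}$, use the face property to force every competing component to be $\pm\bm{s}^{(i)}$ with merged weights $\tau_i$, and cite \cite{kueng2021binary} for the algorithm. You make the face step explicit through the exposing functional $Q$ and the parametrization \eqref{eq:face-description}, where the paper argues via extreme points and barycentric coordinates. Your worry about extra hypotheses in Kueng--Tropp is unnecessary: the paper simply notes that Schur independence is the hypothesis of their Theorem~I and Theorem~3.7, and no condition on $m$ enters.
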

	\begin{proof}
		We condition on Schur independence and drop the boldface. The matrix $S$ has rank $n$, and all weights are positive, so $\rank A_\tau=n$. If $A_\tau=\sum_j\alpha_j t^{(j)}(t^{(j)})^\top$ is another positive
		sign decomposition, the diagonal enforces $\sum_j\alpha_j=1$. Since $A_\tau\in\mathcal{F}$ and $\mathcal{F}$ is a face of $\mathcal{E}_m$, every alternative component matrix lies in $\mathcal{F}$. A rank-one sign matrix is an extreme point of $\mathcal{E}_m$ and must therefore be a vertex of this simplex. After identical terms are merged, uniqueness of barycentric coordinates fixes their weights. Rank-one sign matrices determine their sign vectors
		up to a global sign. The algorithmic conclusion is exactly \cite[Theorem~I and Theorem~3.7]{kueng2021binary}, since Schur independence is its hypothesis.
	\end{proof}

	For positive weights, $\text{range}(A_\tau)=\text{range}(S)$. The projector $Q$ onto $\ker A_\tau$ is therefore determined by the data, and \eqref{eq:face-description} gives
	\[
	\mathcal{F}=\{H\in\mathcal{E}_m:\tr(QH)=0\}.
	\]
	Kueng--Tropp optimize a random linear functional over this face to find one vertex, deflate that component, and repeat; the recovered vertices then determine the weights~\cite[Sections~3.6--3.7]{kueng2021binary}.

	We briefly compare our corollary with the best previous bound.  
	In the notation here, Tropp's Theorems~2.9 and~2.11 for uniform signs bound failure of the simplicial-face conclusion by, respectively,
	\begin{equation}\label{eq:tropp-bounds}
		n^2e^{-m/n^2},\qquad 4\exp\left(\frac{n^2-c_0m/16}{4}\right),
	\end{equation}
	where $c_0>0$ is an absolute constant \cite{kueng2021binary}. Thus, they require $m \geq C_0 n^2$ samples, for which our result brings the unspecified constant down to the exact threshold.

	\bibliographystyle{plain}
	\bibliography{tensor.bib}

\begin{thebibliography}{10}

\bibitem{MR3400278}
Emmanuel Abbe, Amir Shpilka, and Avi Wigderson.
\newblock Reed-{M}uller codes for random erasures and errors.
\newblock {\em IEEE Trans. Inform. Theory}, 61(10):5229--5252, 2015.

\bibitem{anari2018smoothed}
Nima Anari, Constantinos Daskalakis, Wolfgang Maass, Christos Papadimitriou,
  Amin Saberi, and Santosh Vempala.
\newblock Smoothed analysis of discrete tensor decomposition and assemblies of
  neurons.
\newblock {\em Advances in neural information processing systems}, 31, 2018.

\bibitem{anderson2014more}
Joseph Anderson, Mikhail Belkin, Navin Goyal, Luis Rademacher, and James Voss.
\newblock The more, the merrier: the blessing of dimensionality for learning
  large gaussian mixtures.
\newblock In {\em Conference on Learning Theory}, pages 1135--1164. PMLR, 2014.

\bibitem{BV2018v1}
P.~Baldi and R.~Vershynin.
\newblock Boolean polynomial threshold functions and random tensors, 2018.
\newblock arXiv:1803.10868v1.

\bibitem{MR4016132}
Pierre Baldi and Roman Vershynin.
\newblock Polynomial threshold functions, hyperplane arrangements, and random
  tensors.
\newblock {\em SIAM J. Math. Data Sci.}, 1(4):699--729, 2019.

\bibitem{MR4438378}
Aditya Bhaskara, Aidao Chen, Aidan Perreault, and Aravindan Vijayaraghavan.
\newblock Smoothed analysis for tensor methods in unsupervised learning.
\newblock {\em Math. Program.}, 193(2):549--599, 2022.

\bibitem{MR2557947}
Jean Bourgain, Van~H. Vu, and Philip~Matchett Wood.
\newblock On the singularity probability of discrete random matrices.
\newblock {\em J. Funct. Anal.}, 258(2):559--603, 2010.

\bibitem{MR14608}
P.~Erd\"os.
\newblock On a lemma of {L}ittlewood and {O}fford.
\newblock {\em Bull. Amer. Math. Soc.}, 51:898--902, 1945.

\bibitem{MR4273471}
Asaf Ferber, Vishesh Jain, Kyle Luh, and Wojciech Samotij.
\newblock On the counting problem in inverse {L}ittlewood-{O}fford theory.
\newblock {\em J. Lond. Math. Soc. (2)}, 103(4):1333--1362, 2021.

\bibitem{MR4356701}
Vishesh Jain, Ashwin Sah, and Mehtaab Sawhney.
\newblock Singularity of discrete random matrices.
\newblock {\em Geom. Funct. Anal.}, 31(5):1160--1218, 2021.

\bibitem{MR1260107}
Jeff Kahn, J\'anos Koml\'os, and Endre Szemer\'edi.
\newblock On the probability that a random {$\pm 1$}-matrix is singular.
\newblock {\em J. Amer. Math. Soc.}, 8(1):223--240, 1995.

\bibitem{MR238371}
J.~Koml\'os.
\newblock On the determinant of random matrices.
\newblock {\em Studia Sci. Math. Hungar.}, 3:387--399, 1968.

\bibitem{kueng2021binary}
Richard Kueng and Joel~A Tropp.
\newblock Binary component decomposition part i: the positive-semidefinite
  case.
\newblock {\em SIAM Journal on Mathematics of Data Science}, 3(2):544--572,
  2021.

\bibitem{MR4068053}
Stefan Kunis, H.~Michael M\"oller, and Ulrich von~der Ohe.
\newblock Prony's method on the sphere.
\newblock {\em SMAI J. Comput. Math.}, S5:87--97, 2019.

\bibitem{kwan2025resolution}
Matthew Kwan and Lisa Sauermann.
\newblock Resolution of the quadratic littlewood--offord problem.
\newblock {\em Compositio Mathematica}, 161(12):3089--3139, 2025.

\bibitem{laurent1996facial}
Monique Laurent and Svatopluk Poljak.
\newblock On the facial structure of the set of correlation matrices.
\newblock {\em SIAM Journal on Matrix Analysis and Applications},
  17(3):530--547, 1996.

\bibitem{MR9656}
J.~E. Littlewood and A.~C. Offord.
\newblock On the number of real roots of a random algebraic equation. {III}.
\newblock {\em Rec. Math. [Mat. Sbornik] N.S.}, 12/54:277--286, 1943.

\bibitem{MR3542863}
Raghu Meka, Oanh Nguyen, and Van Vu.
\newblock Anti-concentration for polynomials of independent random variables.
\newblock {\em Theory Comput.}, 12:Paper No. 11, 16, 2016.

\bibitem{MR2955045}
Hoi~H. Nguyen.
\newblock On the least singular value of random symmetric matrices.
\newblock {\em Electron. J. Probab.}, 17:no. 53, 19, 2012.

\bibitem{o2014analysis}
Ryan O'Donnell.
\newblock {\em Analysis of boolean functions}, volume~2.
\newblock Cambridge University Press Cambridge, 2014.

\bibitem{MR2407948}
Mark Rudelson and Roman Vershynin.
\newblock The {L}ittlewood-{O}fford problem and invertibility of random
  matrices.
\newblock {\em Adv. Math.}, 218(2):600--633, 2008.

\bibitem{MR2480613}
Terence Tao and Van~H. Vu.
\newblock Inverse {L}ittlewood-{O}fford theorems and the condition number of
  random discrete matrices.
\newblock {\em Ann. of Math. (2)}, 169(2):595--632, 2009.

\bibitem{MR4076632}
Konstantin Tikhomirov.
\newblock Singularity of random {B}ernoulli matrices.
\newblock {\em Ann. of Math. (2)}, 191(2):593--634, 2020.

\bibitem{tropp2018simplicial}
Joel~A Tropp.
\newblock Simplicial faces of the set of correlation matrices.
\newblock {\em Discrete \& Computational Geometry}, 60(2):512--529, 2018.

\bibitem{MR4140540}
Roman Vershynin.
\newblock Concentration inequalities for random tensors.
\newblock {\em Bernoulli}, 26(4):3139--3162, 2020.

\end{thebibliography}
	
\end{document}